\RequirePackage{fix-cm}
\documentclass[smallcondensed]{svjour3}     
\smartqed  
\usepackage{graphicx}
\usepackage{booktabs}
\usepackage{amssymb}
\usepackage{enumitem}
\usepackage{url}
\usepackage{siunitx}
\usepackage{amsmath} 
\usepackage{bm}
\usepackage[acronym]{glossaries}   

\newacronym{pwl}{PWL}{piecewise linear}
\newacronym{cpwl}{CPWL}{continuous piecewise linear}
\newacronym{lp}{LP}{linear programming}
\newacronym{milp}{MILP}{mixed-integer linear programming}
\newacronym{dc}{DC}{difference-of-convex}
\newacronym{minlp}{MINLP}{mixed-integer nonlinear programs}
\newacronym{miqcp}{MIQCP}{mixed-integer quadratic constrained programming}
\newacronym{qp}{QP}{quadratic programming}
\newacronym{mc}{MC}{Multiple Choice}
\newacronym{scbp}{SCBP}{sequentially-coupled bilinear programs}

\makeatletter
\renewcommand\@biblabel[1]{\makebox[1.5em][r]{#1.}}
\makeatother

\begin{document}

\title{When MILP Beats QP: Piecewise-Linear Reformulations of Sequentially Coupled Bilinear Programs
}

\titlerunning{When MILP Beats QP}        

\author{Quentin Ploussard         \and
        Maris Usis \and
        Oluwabunmi Iwakin \and
        Matija Pavičević
}


\institute{Quentin Ploussard \at
              Argonne National Laboratory, Lemont, IL, USA \\
              Tel.: (630) 252-5149\\
              \email{qploussard@anl.gov}           
           \and
           Maris Usis \at
              Argonne National Laboratory, Lemont, IL, USA
           \and
           Oluwabunmi Iwakin \at
              Lehigh University, Bethlehem, PA, USA
            \and
            Matija Pavičević \at
              Argonne National Laboratory, Lemont, IL, USA
}

\date{Received: date / Accepted: date}

\maketitle

\begin{abstract}
The presence of bilinear terms in mathematical modeling generally yields nonconvex quadratic programs (QPs) that remain computationally challenging to solve to global optimality. While continuous piecewise linear (CPWL) approximations can reformulate these nonlinearities into mixed-integer linear programs (MILPs), the geometric construction of the domain partition heavily dictates the resulting solver efficiency. In this paper, we present efficient MILP formulations for approximating bilinear terms and rigorously evaluate their computational merits against direct QP solvers. First, we introduce CPWL approximations with arbitrary high degrees of accuracy that explicitly account for and exploit the inherent symmetries of the unit bilinear function. Second, we analytically establish the exact maximum and average approximation errors of the CPWL approximations. Third, we construct and compare three distinct MILP formulations (a ``Triangle'', a ``Square'', and a difference-of-convex ``DC'' formulation) of the CPWL functions. Fourth, we formalize a highly relevant class of optimization models, Sequentially Coupled Bilinear Programs (SCBP), where variables represent system states and state changes. Finally, through extensive numerical experiments, we demonstrate that our compact ``Square'' MILP formulation achieves superior computational performance on SCBP instances with long sequence horizons, allowing mature open-source MILP solvers to effectively outperform QP solvers.
\keywords{Mixed-integer linear programming \and Nonconvex quadratic programming \and Bilinear programming \and Continuous piecewise linear approximation}
\end{abstract}

\section{Introduction}
\label{intro}

The product of two continuous decision variables (commonly referred to as a bilinear term) is widely used in mathematical modeling to describe a diverse range of physical and economic relationships. This representation is fundamental in global optimization applications spanning generalized pooling problems that capture stream mixing and blending properties \cite{hao_modeling_2026,deassis_piecewise_2017}, flow-head dynamics in optimal hydropower scheduling and water distribution \cite{cerisola_stochastic_2012,guedes_unit_2017,tasseff_polyhedral_2024}, network flow optimization for transportation systems \cite{klansek_comparison_2014,gao_piecewise_2018}, and production planning \cite{yan_hierarchical_2025,gabrielli_optimization_2020}. However, the presence of bilinear terms generally yields nonconvex \gls{qp} or \gls{miqcp} problems. Because these formulations require continuous spatial branch-and-bound algorithms \cite{bestuzheva_global_2025}, they often suffer from weak relaxations and scale poorly, remaining computationally challenging even for realistically sized instances.

Recent advances in algorithmic design have enabled general-purpose global solvers to tackle these nonconvex \gls{qp}s by combining spatial branch-and-bound algorithms with convex relaxation techniques \cite{grubel_successive_2023,linan_trends_2025,misener_apogee_2011,gurobi_optimization_2024}. The most notable of these are McCormick envelopes, which construct polyhedral over- and under-estimators to establish rigorous bounds \cite{mccormick_computability_1976,liberti_exact_2006}. The computational efficiency of these solvers is largely governed by the tightness of these relaxations \cite{nagarajan_tightening_2016}. To tighten the bounds, piecewise McCormick relaxation methods often partition the variable domains \cite{wicaksono_piecewise_2008,hasan_piecewise_2010}. Unfortunately, traditional spatial partitioning strategies can quickly yield prohibitively large models, limiting practical tractability \cite{castro_tightening_2015}.

In this study, we place particular emphasis on leveraging \gls{milp} solvers to address nonconvex \gls{qp}s by employing \gls{cpwl} approximations of the bilinear terms. This paradigm shift is strongly motivated by the remarkable maturity of state-of-the-art \gls{milp} solvers, including open-source tools \cite{Huangfu2018Parallelizing}, which can often process discrete branch-and-bound trees more efficiently than their continuous nonlinear counterparts. Recent literature strongly supports this direction; for example, Zhang et al. \cite{zhang_improving_2025} recently demonstrated that progressive \gls{milp} methods can significantly improve the solution of indefinite \gls{qp}s. Focusing specifically on geometric approximations, certain \gls{cpwl} functions can be represented through continuous relaxations that are as sharp as McCormick envelopes, facilitating tight bounds that preserve computational efficiency \cite{barmann_approximation_2023,geissler_using_2012}. Consequently, literature on optimal \gls{cpwl} approximations has grown significantly, highlighting triangulation methods \cite{kutzer_using_2021,vielma_mixed_2010,dambrosio_piecewise_2010,dambrosio_application_2010,borghetti_milp_2008,vielma_modeling_2011,vielma_embedding_2018,huchette_combinatorial_2019,huchette_nonconvex_2023,barmann_piecewise_2022} and \gls{dc} representations \cite{kripfganz_piecewise_1987,kazda_nonconvex_2021,kazda_linear_2024,ploussard_tightening_2025} as viable ways to capture bivariate functions.

Despite these advancements, optimizing the construction of \gls{cpwl} functions to yield the most tractable \gls{milp} remains an open research problem. Specifically, there is a lack of theoretical analyses establishing guaranteed approximation error bounds across different geometric partitions. More importantly, from a practical Operations Research perspective, the specific problem structures and conditions under which a \gls{cpwl}-approximated \gls{milp} formulation definitively outperforms a direct, state-of-the-art \gls{qp} solver remain largely undefined.

To address these gaps, we present exact, efficient \gls{milp} formulations for approximating bilinear terms and rigorously evaluate their computational merits against direct \gls{qp} solvers. The main contributions of this paper are summarized as follows:
\begin{itemize}
    \item First, we formally introduce the \gls{cpwl} function $g_n$, designed to approximate the unit bilinear term $[0,1]^2 \rightarrow \mathbb{R}, \enspace (x,y) \rightarrow xy$ with an arbitrary, predefined degree of accuracy $n$. Crucially, the geometric construction of this approximation explicitly accounts for and exploits the inherent symmetries of the bilinear function.
    
    \item Second, we analytically establish the exact maximum and average approximation errors of $g_n$.
    
    \item Third, we construct and compare three distinct \gls{milp} formulations of the \gls{cpwl} function $g_n$ : a ``Triangle'' formulation with $4n^2$ pieces, a ``Square'' formulation with $2n(n+1)$ pieces, and a ``\gls{dc}'' formulation comprised of $4n$ pieces.
    
    \item Fourth, we define a specific, highly relevant class of optimization models, \gls{scbp}, where the objective function consists of a sum of bilinear terms, and each bilinear term is composed of one variable representing the system state, and another variable representing the change in state.
    
    \item Finally, through extensive numerical experiments, we demonstrate that while general quadratic formulations (such as those in QPLIB \cite{Furini2019QPLIB}) are best left to \gls{qp} solvers, the ``Square'' \gls{milp} formulation achieves superior computational performance on \gls{scbp} instances with long sequence horizons, allowing mature open-source \gls{milp} solvers to effectively beat commercial \gls{qp} solvers.
\end{itemize}

The remainder of this paper is organized as follows. In Section 2, we introduce the \gls{cpwl} approximation of the bilinear term and analyze its approximation error. Section 3 details the \gls{milp} formulations for the distinct geometric representations and discusses their size complexity. In Section 4, we formally define the class of \gls{scbp} problems and analyze the feasibility and quality of their \gls{cpwl} solutions. Section 5 presents extensive numerical experiments comparing the computational performance of our proposed \gls{milp} approximations against state-of-the-art direct \gls{qp} solvers on both \gls{scbp} and QPLIB instances. Finally,  we conclude the paper in Section 6, while highlighting viable future contributions.

\section{\gls{cpwl} approximation of the bilinear term}
\label{sec:cpwl_approx}
In this section, we introduce the \gls{cpwl} function $g_n$ that we build by triangulating a set of points in $[0,1]^3$. We call this triangulation the ``Triangle'' representation of $g_n$. Then, leveraging the geometry of the linear pieces of $g_n$, we introduce two alternative representations called ``Square'' and ``\gls{dc}'' representations. Additionally, we analytically establish the maximum and average approximation error between $g_n$ and the unit bilinear term.

\subsection{Expression of the \gls{cpwl} function}

Let $f : [0,1]^2 \to \mathbb{R}, \quad f(x, y) = x y$.
This section introduces the \gls{cpwl} function $g_n: [0,1]^2 \to \mathbb{R}$ designed to approximate $f$ with a certain degree of accuracy $n$. 


To construct the \gls{cpwl} approximation $g_n$, we evaluate $f$ across a regularly spaced grid and connect the resulting points via triangulation. Because $f$ is symmetric with respect to $x+y=0$ and $x-y=0$, the grid is naturally oriented along these lines. The grid of points is defined by the set $S_n$ below.
\[
S_n = \left\{ \left(\frac{j+k-n}{2n},\frac{j-k+n}{2n} \right) \in [0,1]^2, \quad j,k \in \{0, ..., 2n\}\right\}
\]

\begin{definition}
We define the function $g_n$ as the \gls{cpwl} function obtained by triangulating the set of points $\{(x,y,f(x,y)) \in [0,1]^3, \;(x,y) \in S_n\}$.
\end{definition}

An illustration of $S_n$ and its corresponding triangulation is depicted in Figure \ref{fig:domain}. Such triangulation results in a total of $4n^2$ triangular pieces. We call this \gls{cpwl} representation the ``Triangle'' representation of $g_n$.

\begin{figure}
    \centering
    \includegraphics[width=1.0\linewidth]{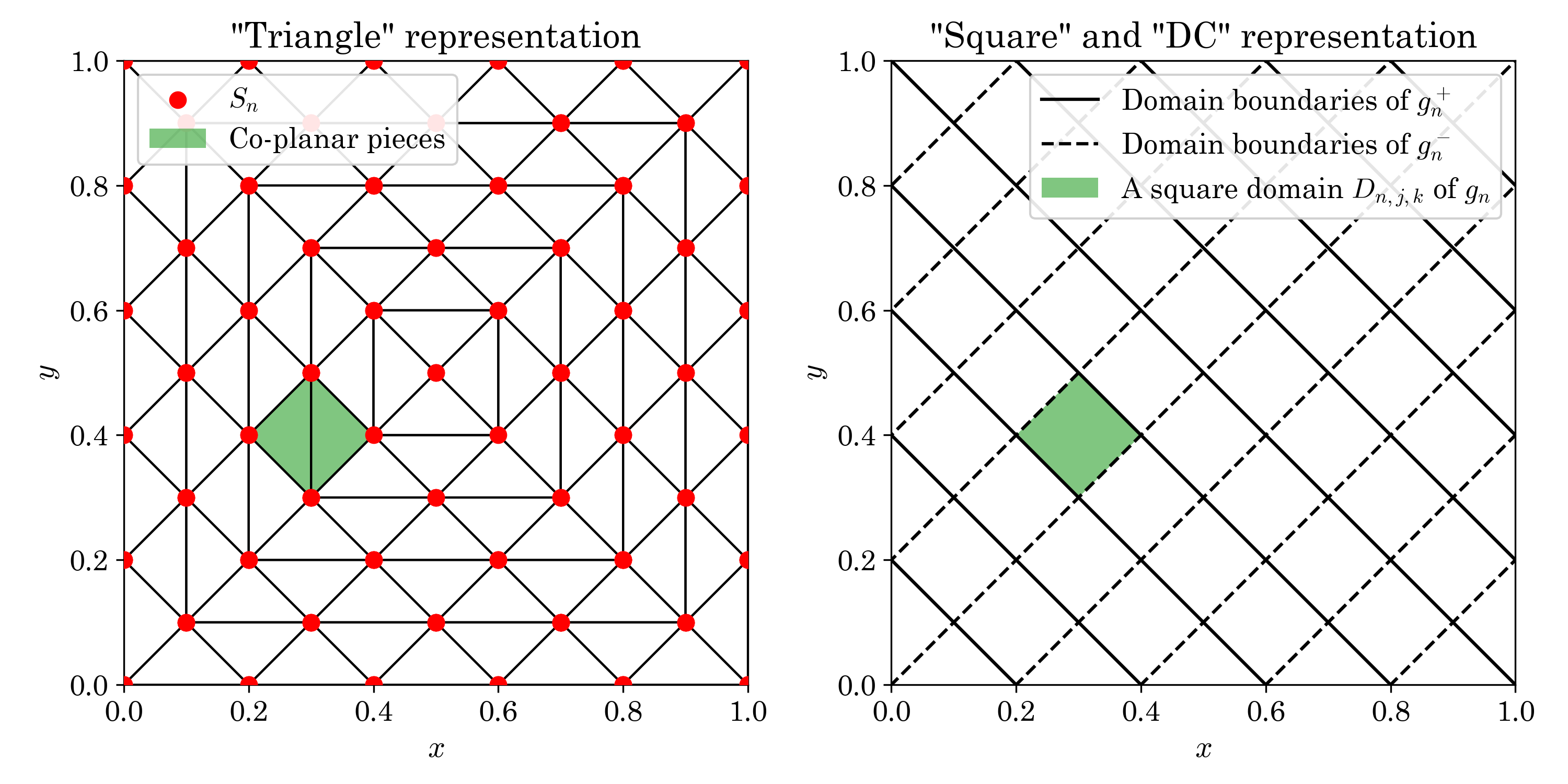}
    \caption[Linear domains of $g_n$ depending on its representation ($n=5$)]{Linear domains of $g_n$ depending on its representation ($n=5$)}
\label{fig:domain}
\end{figure}

An analysis of $g_n$'s gradient show that all interior triangular pieces can be grouped into pairs of adjacent co-planar pieces. Each of these pairs can be merged into a linear piece defined on a square region, thereby reducing the total number of linear pieces to $2n(n+1)$. We call this \gls{cpwl} representation the ``Square'' representation of $g_n$. This representation is illustrated in Figure \ref{fig:domain}. In the ``Square'' representation, the domains of the linear pieces of $g_n$ are:
\[
D_{n,j,k} = \left\{ (x,y) \in [0,1]^2, \left\{
\begin{array}{c}
\frac{j}{n} \leq x+y \leq \frac{j+1}{n}\ \\ \frac{k-n}{n} \leq y-x \leq \frac{k+1-n}{n} 
\end{array}
 \right\} \right\}, \quad j,k \in \{0,...,2n-1\}
\]

And the function $g_n$ can be expressed as
\begin{align*}
&g_n(x,y) = \\ 
&\left( \frac{j+k+1-n}{2n} \right) x + \left( \frac{j-k+n}{2n} \right) y  -
\left( \frac{j+k+1-n}{2n} \right)\left( \frac{j-k+n}{2n} \right)
\\ &\forall (x,y) \in D_{n,j,k}
\end{align*}

A proof of the co-planarity of the adjacent triangular pieces and of the expression of $g_n$ on $D_{n,j,k}$ can be found in Online Resource 1 (ESM\_1).

Furthermore, $g_n$ can be expressed as a \gls{dc} function $g_n = g_n^+ - g_n^-$. This decomposition is based on the on the separation of the variables $u$ and $v$ in $xy = u^2 - v^2$, where $u = \frac{1}{2}(x+y)$ and $v=\frac{1}{2}(x-y)$ \cite{barmann_piecewise_2022}. Each convex component can be expressed as a \gls{cpwl} function composed of $2n$ linear pieces. The domain of these linear pieces are strips of width $\frac{1}{n\sqrt{2}}$ oriented at a \ang{-45} and \ang{45} angle (Figure \ref{fig:domain}).

The convex components of $g_n$ can be expressed as:
\begin{align*}
&g_n^+(x,y)
= \max_{0 \leq j < 2n} g_{n,j}^+(x,y) \\
&= \max_{0 \leq j < 2n}
\left[
\left( \frac{2j+1}{4n}\right)x
+
\left( \frac{2j+1}{4n}\right)y
-
\frac{j(j+1)}{4n^2}
\right] \\
&g_n^-(x,y)
= \max_{0 \leq k < 2n} g_{n,k}^-(x,y) \\
&= \max_{0 \leq k < 2n}
\left[
-\left( \frac{2(k-n)+1}{4n} \right)x
+
\left( \frac{2(k-n)+1}{4n} \right)y
-
\frac{(k-n)(k-n+1)}{4n^2}
\right]
\end{align*}

It can be shown that $g_n^+$ is equal to $g_{n,j}^+$ on the domain $D_{n,j}^+=\{(x,y) \in [0,1]^2: \frac{j}{n} \leq x+y \leq \frac{j+1}{n}\}$, and that $g_n^-$ is equal to $g_{n,k}^-$ on the domain $D_{n,k}^-=\{(x,y) \in [0,1]^2: \frac{k}{n} \leq y-x+1 \leq \frac{k+1}{n}\}$. A detailed proof of the \gls{dc} representation of $g_n$ can be found in Online Resource 1 (ESM\_1).

\begin{remark}
Note that each ``Square'' domain corresponds to the intersection of a linear domain of $g_n^+$ and $g_n^-$, as illustrated in Figure \ref{fig:domain}. In other words, $D_{n,j,k} = D_{n,j}^+ \cap D_{n,k}^-$.
\end{remark}

A summary of the linear piece features of each representation is provided in Table \ref{tab:nber_pieces}. An illustration of the \gls{cpwl} function $g_n$ and of its \gls{dc} components is shown in Figure \ref{fig:3D_plot}. Note that the surfaces of the functions $g_n^+$ and $g_n^-$ connect seamlessly along the lines $x=0$ and $y=0$, which is consistent with the functions $g_n$ and $f$ being equal to zero on these lines.

\begin{remark}
It is worth emphasizing that the ``Triangle'', ``Square'', and ``\gls{dc}'' representations map to the exact same underlying \gls{cpwl} function \( g_n \). They differ fundamentally, however, in how they geometrically partition the variable domain into affine pieces, which consequently leads to distinct \gls{milp} formulations as described in the next section.
\end{remark}

\begin{table}[htbp]
\centering
\caption{Features of the linear pieces for each representation of $g_n$}
\label{tab:nber_pieces}
\begin{tabular}{p{2.4cm} p{3.2cm} p{2.6cm}}
\toprule
\textbf{Representation} & \textbf{Domain geometry of the linear pieces} & \textbf{Number of linear pieces} \\
\midrule
Triangle & Triangles & $4n^2$ \\
Square & Squares & $2n(n+1)$ \\
DC & Strips & $2 \times 2n$ \\
\bottomrule
\end{tabular}
\end{table}

\begin{figure}
    \centering    \includegraphics[width=1.0\linewidth]{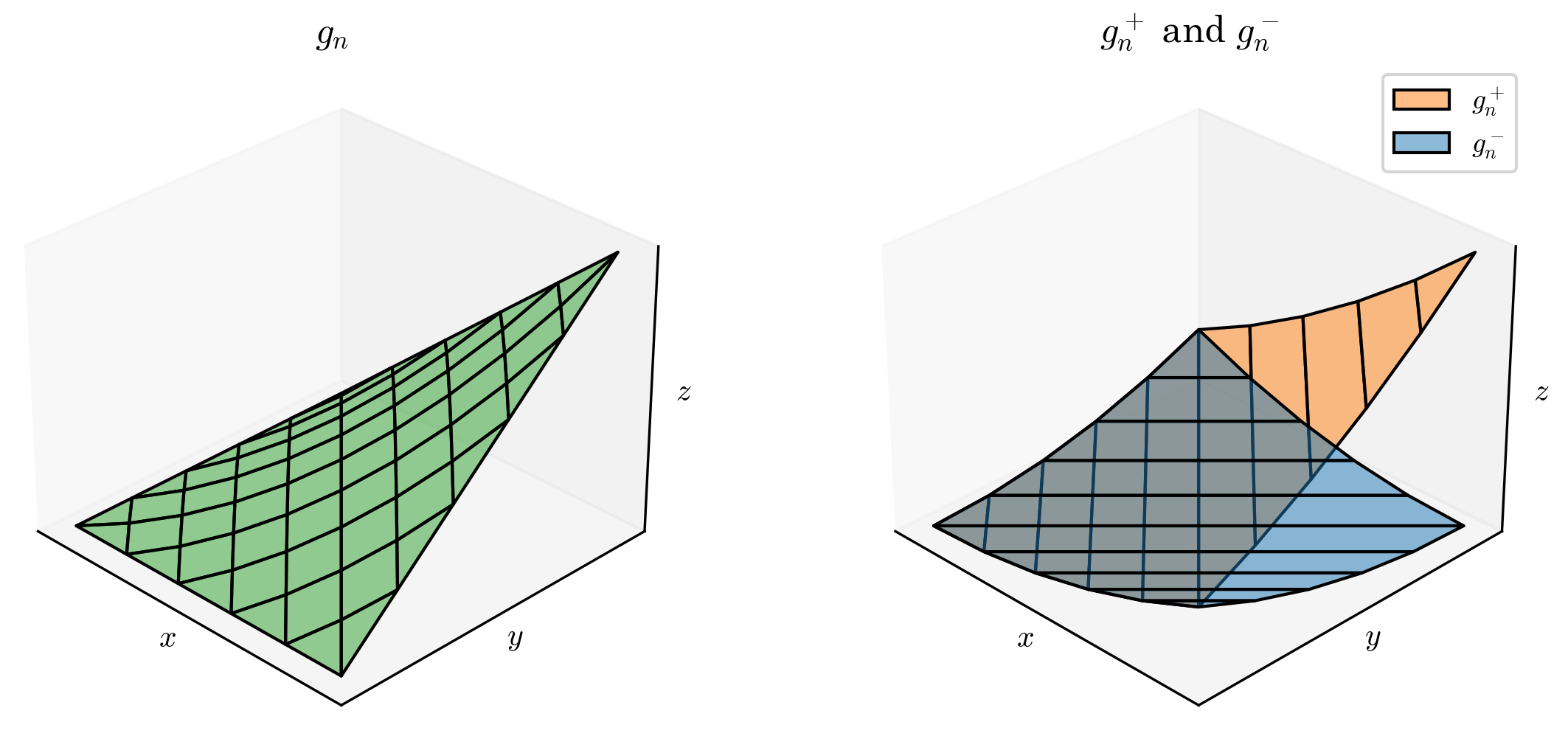}
    \caption[Illustration of $g_n$, $g_n^+$, and $g_n^-$ ($n=5$)]{Illustration of $g_n$, $g_n^+$, and $g_n^-$ ($n=5$)}
\label{fig:3D_plot}
\end{figure}

\subsection{Approximation error}

\begin{lemma}
    \label{max_R0}
    Let $\rho>0$. Let $R_{\rho}$ be the closed square domain defined by $\{(x,y) \in \mathbb{R}^2: |x|+|y| \leq \rho\}$. Let $E_{\rho}: R_{\rho} \to \mathbb{R}, \quad E_{\rho}(x,y) = xy$. Then, the maximum value of $|E_{\rho}|$ is $\rho^2/4$.
\end{lemma}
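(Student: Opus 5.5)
The plan is to prove the two inequalities that make up the claim: an upper bound $|xy| \leq \rho^2/4$ valid on all of $R_\rho$, and attainment of this value at explicit points of $R_\rho$. Since $|E_\rho(x,y)| = |x|\,|y|$ depends only on $|x|$ and $|y|$, and the constraint $|x|+|y|\leq\rho$ does too, I will first reduce to the nonnegative quadrant. Set $a=|x|\geq 0$ and $b=|y|\geq 0$. Then $|E_\rho(x,y)| = ab$, and the problem becomes maximizing $ab$ subject to $a,b\geq 0$ and $a+b\leq\rho$.

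For the upper bound I will use the AM--GM inequality, which can be written as the identity
\[
ab = \left(\frac{a+b}{2}\right)^2 - \left(\frac{a-b}{2}\right)^2 \leq \left(\frac{a+b}{2}\right)^2 \leq \frac{\rho^2}{4}.
\]
The last step uses $0 \leq a+b \leq \rho$. This identity is the same $u^2-v^2$ splitting used in the \gls{dc} representation above, so it matches the spirit of the section.

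For attainment, equality requires $a=b$ and $a+b=\rho$, i.e.\ $a=b=\rho/2$. The point $(x,y)=(\rho/2,\rho/2)$ lies in $R_\rho$ because $|x|+|y|=\rho$, and there $E_\rho=\rho^2/4$. Likewise $(\rho/2,-\rho/2)$ gives $E_\rho=-\rho^2/4$. Thus the maximum of $|E_\rho|$ equals $\rho^2/4$, and it is achieved with both signs, which will matter when this lemma is applied to signed errors. Existence of the maximum is also immediate by compactness, but the explicit maximizers make that unnecessary.

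No step here is a real obstacle. The only care needed is the reduction to absolute values, checking that the sign-symmetric reformulation loses nothing, and recording that both $\pm\rho^2/4$ are attained, since later error bounds on each square piece $D_{n,j,k}$ will likely translate that piece onto $R_\rho$ with $\rho = 1/(2n)$.
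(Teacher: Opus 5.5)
Your proof is correct, and it takes a more direct route than the paper's. The paper uses calculus. Since $\det(H_{E_\rho})=-1<0$, $E_\rho$ has no interior local extremum, so its extrema over the compact set $R_\rho$ lie on $\partial R_\rho$. The symmetry of $|E_\rho|$ in the lines $x=0$ and $y=0$ then reduces the problem to the single edge $y=x+\rho$, $-\rho\le x\le 0$. There $-x(x+\rho)$ is maximized by rewriting it as $\rho^2/4-v^2$.

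That final step is exactly your difference-of-squares identity, applied only after restricting to one edge. By passing to $a=|x|$, $b=|y|$, you apply it on all of $R_\rho$ at once, because the constraint $|x|+|y|\le\rho$ is precisely a bound on the term $(a+b)/2$. This removes the need for the Hessian argument, the boundary reduction (with its implicit passage from extrema of $E_\rho$ to the maximum of $|E_\rho|$), and the symmetry reduction. Your equality case $|x|=|y|=\rho/2$ also recovers the paper's observation that the maximum is attained at the midpoints of the four edges.

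What the paper's route buys is a reusable method. The recipe is: an indefinite quadratic has no interior extremum, so solve one-dimensional quadratics on the edges. It works on any polygonal piece and for any indefinite quadratic error function. Your argument instead depends on the exact match between the $\ell_1$-ball and the product $ab$.
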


\begin{proof}
    The region $R_{\rho}$ is defined by:
\begin{align*}
    x - \rho &\leq y \leq x + \rho \\
    - x - \rho &\leq y \leq  - x + \rho
\end{align*}
Figure \ref{fig:R_0_2} provides an illustration of the region $R_{\rho}$ and the value of $|E_{\rho}|$.

The function $E_{\rho}$ has no local extrema in the interior of $R_{\rho}$ because:
\[
det(H_{E_{\rho}}) = 
\begin{vmatrix}
\frac{\partial^2 E_{\rho}}{\partial x^2} & \frac{\partial^2 E_{\rho}}{\partial x \partial y} \\
\frac{\partial^2 E_{\rho}}{\partial y \partial x} & \frac{\partial^2 E_{\rho}}{\partial y^2}
\end{vmatrix}
=
\begin{vmatrix}
0 & 1 \\
1 & 0
\end{vmatrix} = -1 <0
\]

Therefore, $E_{\rho}$ reaches its extrema on  $\partial R_{\rho}$, the boundary of $R_{\rho}$, which is composed of the four edges of the square $R_{\rho}$. Because  $|E_{\rho}(x,y)|$ is symmetrical with respect to the lines $x=0$ and $y=0$, the domain on which $\max |E_{\rho}(x,y)|$ is evaluated can be reduced to one edge of the square. It follows that:
\begin{align*}
&\max_{(x,y) \in R_{\rho}}{|E_{\rho}(x,y)|} = \max_{(x,y) \in \partial R_{\rho}}{|E_{\rho}(x,y)|} = \max_{\substack{
  y = x+ \rho \\
  - \rho \leq x \leq 0 
}}{|E_{\rho}(x,y)|} 
\\ &= \max_{-\rho \leq x \leq 0}{|E_{\rho}(x,x + \rho)|} 
 = \max_{-\rho \leq x \leq 0}{|x||x+\rho|} 
= \max_{-\rho \leq x \leq 0}{-x(x+\rho)}
\\ &= \max_{|v| \leq \rho/2}{\left(\frac{\rho}{2}-v\right)\left(\frac{\rho}{2}+v\right)}
 = \max_{|v| \leq \rho/2}{\left(\frac{\rho^2}{4} - v^2\right)}
= \frac{\rho^2}{4} 
\end{align*}
\qed
\end{proof}

As illustrated in Figure \ref{fig:R_0_2}, $|E_{\rho}|$ reaches its maximum value at the center of each edge of $R_{\rho}$.

\begin{figure}[ht]
    \centering
    \includegraphics[width=1.0\linewidth]{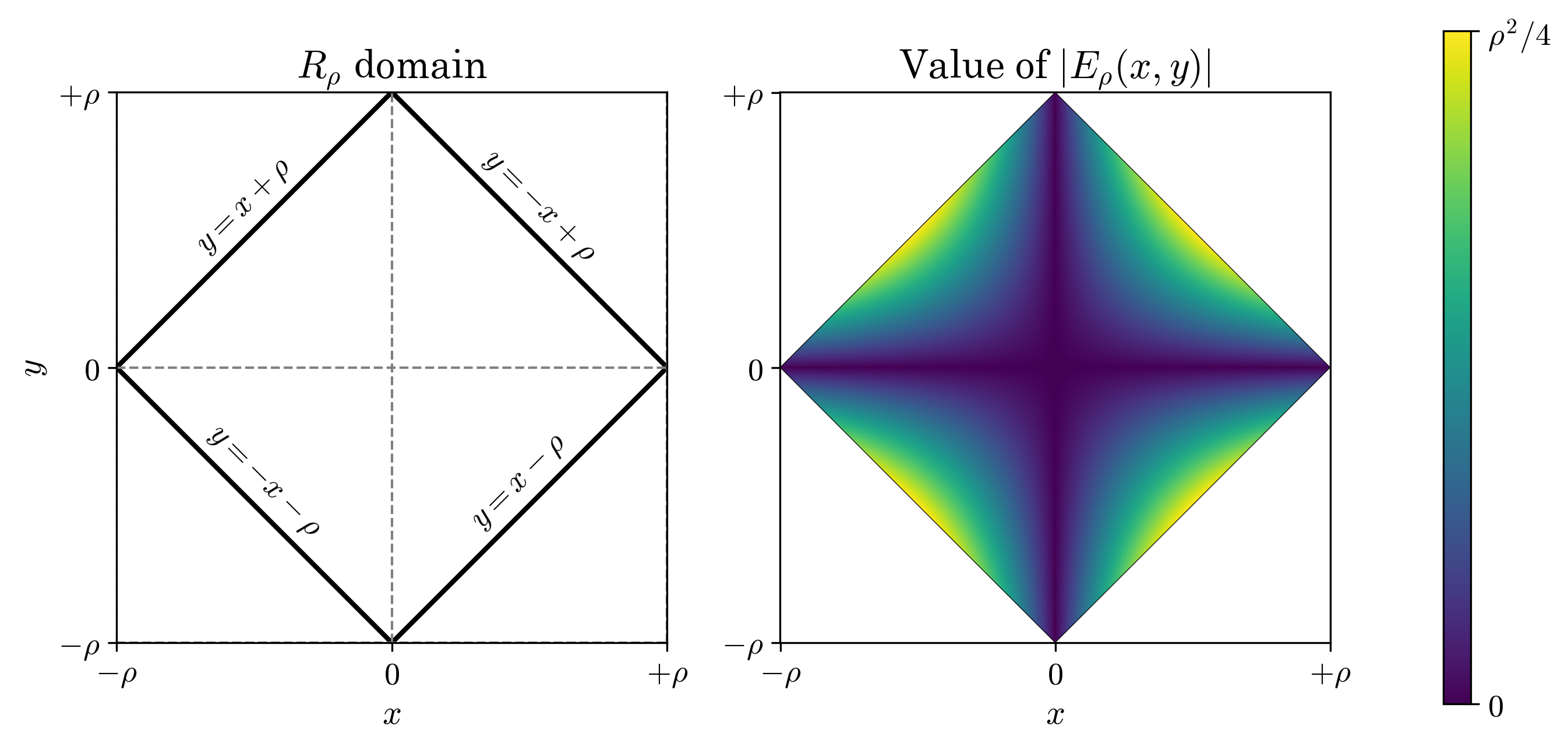}
    \caption[Illustration of $R_{\rho}$ and $|_{\rho}E|$]{Illustration of $R_{\rho}$ and $|E_{\rho}|$}
\label{fig:R_0_2}
\end{figure}

\begin{lemma}
\label{mean_R0}
    The average value of $|E_{\rho}|$ is $\rho^2/12$.
\end{lemma}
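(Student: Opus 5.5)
Our plan is to compute the average directly as $\frac{1}{|R_\rho|}\iint_{R_\rho} |xy|\,dx\,dy$, where $|R_\rho| = 2\rho^2$ is the area of the square $R_\rho$ (a square with diagonal $2\rho$). As in the proof of Lemma \ref{max_R0}, we first exploit the symmetry of $|E_\rho|$ with respect to the lines $x=0$ and $y=0$. This reduces both the integral and the area to the first-quadrant triangle $T_\rho=\{x\ge 0,\ y\ge 0,\ x+y\le \rho\}$, which has area $\rho^2/2$ and on which $|E_\rho(x,y)| = xy$. Hence the average of $|E_\rho|$ over $R_\rho$ equals the average of $xy$ over $T_\rho$.

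There are two natural ways to evaluate $\iint_{T_\rho} xy\,dx\,dy$. The first is the iterated integral
\[
\int_0^\rho x\int_0^{\rho-x} y\,dy\,dx=\frac12\int_0^\rho x(\rho-x)^2\,dx .
\]
The second is the rotated coordinates $u=\frac12(x+y)$, $v=\frac12(x-y)$ already used for the DC representation, in which $xy=u^2-v^2$ and $T_\rho$ becomes $\{0\le u\le \rho/2,\ |v|\le u\}$. We would take the iterated integral, which is a one-variable polynomial integral. By the Beta integral $\int_0^\rho x(\rho-x)^2dx=\rho^4/12$, it gives $\iint_{T_\rho} xy = \rho^4/24$. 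Dividing by the area $\rho^2/2$ yields $\rho^2/12$, as claimed.

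Nothing here is conceptually hard. The only step needing care is the normalization, namely that the symmetry reduction keeps the ratio of integral to area unchanged. This holds because the four quadrant pieces of $R_\rho$ are congruent triangles, each with area $\rho^2/2$, on which $|E_\rho|$ takes identical values. As a sanity check, we would verify the result with the $(u,v)$ computation: the Jacobian contributes a factor $2$, and
\[
2\int_0^{\rho/2}\int_{-u}^{u}(u^2-v^2)\,dv\,du=2\int_0^{\rho/2}\tfrac43u^3\,du=\tfrac{\rho^4}{24},
\]
which agrees.
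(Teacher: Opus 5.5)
Your proposal is correct and follows essentially the same route as the paper. The paper also uses the symmetry of $|E_\rho|$ about $x=0$ and $y=0$ to reduce to one quadrant triangle, though it works on $x\le 0,\ y\ge 0$ and keeps the factor $4/(2\rho^2)$ instead of dividing by $\rho^2/2$, and then evaluates the same one-variable polynomial iterated integral to get $\rho^2/3-\rho^2/4=\rho^2/12$; your $(u,v)$ computation is a valid extra check that the paper does not include.
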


\begin{proof}
The average value of $|E_{\rho}|$ is defined as the double integral of $|E_{\rho}|$ over $R_{\rho}$, divided by the area of $R_{\rho}$. $R_{\rho}$ is a square with sides of length $\rho \sqrt{2}$. Thus, the area of $R_{\rho}$ is $2 \rho^2$. In addition, since the function $|E_{\rho}|$ is symmetrical with respect to the lines $x=0$ and $y=0$, the double integral of $|E_{\rho}|$ over $R_{\rho}$ is equal to four times the double integral of $|E_{\rho}|$ over the region of $R_{\rho}$ where $x \leq 0$ and $y \geq 0$. It follows that:
\begin{align*}
\overline{|E_{\rho}|} &= \frac{1}{|R_{\rho}|} \iint_{R_{\rho}} |E_{\rho}(x,y)| \,dx\,dy =
\frac{4}{2 \rho^2} \int_{x=-\rho}^{0}  \int_{y=0}^{x+\rho} |E_{\rho}(x,y)| \,dx\,dy 
\\ &= \frac{2}{\rho^2} \int_{x=-\rho}^{0}  \int_{y=0}^{x+\rho} |x| |y|\,dx\,dy
 = \frac{2}{\rho^2} \int_{x=-\rho}^{0}  (-x) \left( \int_{y=0}^{x+\rho} y\,dy \right)\,dx 
\\ &=  \frac{2}{\rho^2} \int_{x=-\rho}^{0}  \frac{1}{2}(- x) (x+\rho)^2\,dx 
= \frac{1}{\rho^2} \int_{v=0}^{\rho} (\rho-v)v^2\,dv
\\ &= \frac{1}{\rho} \int_{v=0}^{\rho}{v^2 \,dv} - \frac{1}{\rho^2} \int_{v=0}^{\rho}{v^3\,dv} = \frac{\rho^2}{3} - \frac{\rho^2}{4} = \frac{\rho^2}{12}
\end{align*}
\qed
\end{proof}

\begin{theorem}
\label{theorem_cpwl}
The \gls{cpwl} function $g_n$ approximates $f$ with a maximum error of $1/(16n^2)$ and an average error of $1/(48n^2)$.
\end{theorem}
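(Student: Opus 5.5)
The plan is to show that, on every linear piece of the ``Square'' representation, the error $f-g_n$ is a translated copy of the function $E_{\rho}$ of Lemma~\ref{max_R0}, restricted to (part of) a domain $R_\rho$ with $\rho = 1/(2n)$. Lemmas~\ref{max_R0} and~\ref{mean_R0} then give the two constants. The Triangle and DC representations define the same function $g_n$, so it suffices to work with the domains $D_{n,j,k}$.

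\emph{Step 1 (local factorization).} Fix $j,k$ and write $a=\frac{j+k+1-n}{2n}$ and $b=\frac{j-k+n}{2n}$, so that $g_n(x,y)=ax+by-ab$ on $D_{n,j,k}$. Then
\[
f(x,y)-g_n(x,y) = xy-ax-by+ab = (x-b)(y-a).
\]
Next I identify $c=(b,a)$ as the center of the diamond
\[
\Bigl\{\tfrac{j}{n}\le x+y\le \tfrac{j+1}{n},\ \tfrac{k-n}{n}\le y-x\le \tfrac{k+1-n}{n}\Bigr\}.
\]
Indeed $a+b=\frac{2j+1}{2n}$ and $a-b=\frac{2k+1-2n}{2n}$, which are the midpoints of the two ranges. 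Setting $X=x-b$ and $Y=y-a$, the diamond becomes $|X+Y|\le\frac{1}{2n}$ and $|Y-X|\le \frac{1}{2n}$, which is exactly $|X|+|Y|\le \frac{1}{2n}$. Hence $f-g_n$ on the full diamond is $E_\rho(X,Y)$ with $\rho=\frac1{2n}$. Lemma~\ref{max_R0} gives a maximum of $|f-g_n|$ equal to $\rho^2/4=1/(16n^2)$, and Lemma~\ref{mean_R0} gives an average of $\rho^2/12=1/(48n^2)$.

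\emph{Step 2 (truncated pieces at the boundary).} This is the main obstacle. The domains $D_{n,j,k}$ are intersections of these diamonds with $[0,1]^2$, so along the edges of the unit square they are half-diamonds, and at the corners they are quarter-diamonds. I would argue that every truncation is by a line $X=0$ or $Y=0$ through the center $c$. This holds because the grid $S_n$ places centers on the boundary lines: for example, $a=0$ or $b\in\{0,1\}$ exactly when the diamond crosses $y=0$ or $x\in\{0,1\}$. The two lemmas handle such pieces as follows.
\begin{itemize}
\item[(i)] For the maximum, every half- or quarter-diamond still contains an edge midpoint $(\pm\rho/2,\pm\rho/2)$ of $R_\rho$, where Lemma~\ref{max_R0} shows the value $\rho^2/4$ is attained. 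Since the piece is a subset of $R_\rho$, the maximum there is still $\rho^2/4$.
\item[(ii)] For the average, $|E_\rho|$ is symmetric with respect to $X=0$ and $Y=0$, so its mean over a half or quarter of $R_\rho$ equals its mean over $R_\rho$, namely $\rho^2/12$.
\end{itemize}
I would check the case split explicitly on $n=1$ (four half-diamonds centered at edge midpoints of $[0,1]^2$) to make sure no other truncation occurs.

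\emph{Step 3 (globalization).} The domains $D_{n,j,k}$ cover $[0,1]^2$ and overlap only on sets of measure zero. The global maximum of $|f-g_n|$ is therefore the maximum of the per-piece maxima, which all equal $1/(16n^2)$. The global average is an area-weighted average of per-piece averages that are all equal to $1/(48n^2)$, so it equals $1/(48n^2)$ as well. This proves Theorem~\ref{theorem_cpwl}.
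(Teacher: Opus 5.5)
Your proposal follows the paper's proof: factor $f-g_n=(x-x_0)(y-y_0)$ on each square piece $D_{n,j,k}$, identify the piece with (a subset of) $R_\rho$ for $\rho=1/(2n)$, and invoke Lemmas~\ref{max_R0} and~\ref{mean_R0}. Your Step~2 repairs a point the paper skips, since the paper asserts every $D_{n,j,k}$ is the full diamond $|x-x_0|+|y-y_0|\le 1/(2n)$, whereas the $4n$ pieces along the edges of $[0,1]^2$ are half-diamonds cut through their centers. One small correction: there are no quarter-diamonds. Every center satisfies $2n(x_0+y_0)=2j+1$, which is odd, so no center lies at a corner of $[0,1]^2$, and that case of your Step~2 is vacuous but harmless.
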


\begin{proof}
Let $E : [0,1]^2 \to \mathbb{R}, \enspace E(x, y) = f(x,y) - g_n(x,y)$ be the function measuring the difference between $f$ and $g_n$.

Let $(j,k)$ be such that \(int(D_{n,j,k}) \neq \varnothing\). Let $(x_0,y_0) = (\frac{j-k}{2n}+\frac{1}{2},\frac{j+k+1}{2n}-\frac{1}{2})$.

First, note that $D_{n,j,k}$ is the square domain defined by $|x-x_0|+|y-y_0| \leq \frac{1}{2n}$ (Figure \ref{fig:domain}). In addition, for all $(x,y) \in D_{n,j,k}$, we have:
\begin{align*}
&g_n(x,y) =
\\ &\left( \frac{j+k+1-n}{2n} \right) x + \left( \frac{j-k+n}{2n} \right) y  -
\left( \frac{j+k+1-n}{2n} \right)\left( \frac{j-k+n}{2n} \right)
\\ &= y_0  x + x_0 y - x_0 y_0
\end{align*}

Therefore,
\[ \forall (x,y) \in D_{n,j,k}, \quad E(x,y) = xy - y_0 x - x_0 y + x_0 y_0 = (x-x_0)(y-y_0) \]

Let $\rho = \frac{1}{2n}$. Let $(u,v) = (x-x_0,y-y_0)$. We have $(x,y) \in D_{n,j,k} \Leftrightarrow (u,v) \in R_{\rho}$ and $E(x,y) = E_{\rho}(u,v)$.

Consequently, according to lemma 1: 
\[\max_{(x,y) \in D_{n,j,k}}{|E(x,y)|} = \max_{(u,v) \in R_{\rho}}{|E_{\rho}(u,v)|} = \frac{\rho^2}{4} = \frac{1}{16n^2}\]

According to lemma 2:
\[\frac{1}{|D_{n,j,k}|} \int_{D_{n,j,k}} |E| = \frac{1}{|R_{\rho}|} \int_{R_{\rho}} |E_{\rho}| = \frac{\rho^2}{12} = \frac{1}{48n^2}\]

Note that this result is independent of the value of $(j,k)$. As a result, the maximum and average value of $|E|$ over $[0,1]^2$ is $\frac{1}{16n^2}$ and $\frac{1}{48n^2}$, respectively. 
\qed
\end{proof}

\section{\gls{milp} formulation of the bilinear term}
While the results presented in the previous section apply to the product of two variables bounded between 0 and 1, in this section we show that they can be extended to the product of variables with any finite bounds. We first present the \gls{milp} formulation of $g_n$, which we refer to as the ``normalized case," and then establish its extension to the general case with arbitrary finite bounds.

\label{sec:milp_formulation}
\subsection{Normalized case}
Because $g_n$ is a \gls{cpwl} function, the equation $z = g_n(x,y)$ can be modeled in a \gls{milp} problem. A common and strong \gls{milp} formulation of a \gls{cpwl} function is the \gls{mc} formulation described below \cite{vielma_mixed_2010}.
\begin{align}
\label{eq:z_comp}
z &= \sum_{p=1}^{P} {z_p}\\
\label{eq:xy_comp}
\begin{bmatrix} x \\ y \end{bmatrix} &= \sum_{p=1}^{P} {\begin{bmatrix} x_p \\ y_p \end{bmatrix}}\\
\label{eq:z_eq}
z_p &= \bm{m}_p^T\begin{bmatrix} x_p \\ y_p \end{bmatrix} + \delta_p c_p, &p \in \{1, ..., P\}\\
\label{eq:domain_p}
\bm{A}_p \begin{bmatrix} x_p \\ y_p \end{bmatrix} &\leq \delta_p \bm{b}_p, &p \in \{1, ..., P\}\\
\label{eq:one_piece}
\sum_{j=1}^{P} {\delta_j} &=1\\
x, y, z &\in [0,1]\\
x_p, y_p, z_p &\in [0,1], &p \in \{1, ..., P\}\\
\delta_p &\in \{0,1\}, &p \in \{1, ..., P\}
\end{align}

Here, $P$ is the number of linear pieces of the \gls{cpwl} function. The variables $x$, $y$, and $z$ are expressed as the sum of their $P$ components $x_p$, $y_p$, and $z_p$, respectively (Equations~(\ref{eq:z_comp}) and (\ref{eq:xy_comp})). These components are equal to zero when the piece is deactivated, \textit{i.e.}, $\delta_p = 0$, and to $x$, $y$, and $z$ when the piece is activated, \textit{i.e.}, $\delta_p = 1$ (Equations (\ref{eq:z_eq}), (\ref{eq:domain_p}), (\ref{eq:one_piece})). $\bm{A}_p \begin{bmatrix} x \\ y \end{bmatrix} \leq \bm{b}_p$ represent the set of linear inequalities defining the domain of the linear piece $p$, and $z = \bm{m}_p^T \begin{bmatrix} x \\ y \end{bmatrix}+c_p$ represent its affine equation. 

For a bivariate \gls{cpwl} function, the linear inequalities defining each piece’s domain correspond to the edges of its polygonal domain. Specifically, for the ``Triangle'' representation of $g_n$, the domain of each piece is defined by a set of 3 linear inequalities, whereas for the ``Square'' representation of $g_n$, the domain $D_{n,j,k}$ of each piece is defined by a set of 4 linear inequalities.

For the ``\gls{dc}'' representation, we add equation (\ref{eq:dc}), where each convex component $z^+$ and $z^-$ is formulated by its own set of \gls{mc} constraints. The domain of each piece of $z^+$ and $z^-$ is defined by a set of 2 linear inequalities defining the boundaries of the corresponding strip.
\begin{equation}
\label{eq:dc}
z = z^+ - z^-
\end{equation}

The detailed \gls{milp} formulation of $z=g_n(x)$ under the ``Square'' and ``\gls{dc}'' representations can be found in Online Resource 1 (ESM\_1).

\subsection{Extension to the case with arbitrary bounds}
\label{sec:general_case}

The results above can be generalized to the product of two variables with arbitrary bounds  $X \in [\underline{X},\overline{X}]$ and $Y \in [\underline{Y},\overline{Y}]$ by including the three linear equations below.

\begin{align}
X &= \underline{X} + (\overline{X} - \underline{X})x
\\ Y &= \underline{Y} + (\overline{Y} - \underline{Y})y
\\ Z &= \underline{X}\underline{Y} + \underline{Y}(\overline{X}-\underline{X}) x + \underline{X}(\overline{Y}-\underline{Y}) y  + (\overline{Y}-\underline{Y})(\overline{X}-\underline{X}) z
\end{align}

\begin{theorem}
\label{th:approx_error}
The average and maximum approximation error of the variable $Z$ with respect to the bilinear term $XY$ are $(\overline{X} - \underline{X})(\overline{Y} - \underline{Y}) / (16n^2)$ and $(\overline{X} - \underline{X})(\overline{Y} - \underline{Y}) / (48n^2)$
\end{theorem}
\begin{proof}
The proof stems from rescaling $X$ and $Y$ and applying Theorem \ref{theorem_cpwl}.
Let $G_n$ the function defined on $[\underline{X},\overline{X}] \times [\underline{Y},\overline{Y}]$ and expressed by:

\begin{align*}
G_n(X,Y) &= \underline{X}\underline{Y} + \underline{Y}(X-\underline{X})+ \underline{X}(Y-\underline{Y}) \\ &+ (\overline{Y}-\underline{Y})(\overline{X}-\underline{X}) \cdot g_n \left( \frac{X-\underline{X}}{\overline{X}-\underline{X}}, \frac{Y-\underline{Y}}{\overline{Y}-\underline{Y}} \right)
\end{align*}

$G_n$ is a \gls{cpwl} function because $g_n$ is a \gls{cpwl} function, and is composed of the same number of linear pieces as $g_n$.


Let $(X,Y) \in [\underline{X},\overline{X}] \times [\underline{Y},\overline{Y}]$. Let $(x,y) = \left( \frac{X-\underline{X}}{\overline{X}-\underline{X}}, \frac{Y-\underline{Y}}{\overline{Y}-\underline{Y}} \right)$. We have:
\begin{align*}
&XY - G_n(X,Y) 
\\ &= (\underline{X} + x (\overline{X}-\underline{X}))(\underline{Y} + y (\overline{Y}-\underline{Y})) - G_n(X,Y) 
\\ &= \underline{X}\underline{Y} + \underline{Y}(\overline{X}-\underline{X})x + \underline{X}(\overline{Y}-\underline{Y})y + (\overline{Y}-\underline{Y})(\overline{X}-\underline{X}) xy - G_n(X,Y) 
\\ &= (\overline{Y}-\underline{Y})(\overline{X}-\underline{X})xy -  (\overline{Y}-\underline{Y})(\overline{X}-\underline{X}) g_n(x,y)
\\ &= (\overline{Y}-\underline{Y})(\overline{X}-\underline{X}) (xy - g_n(x,y))
\end{align*}

Therefore, according to Theorem \ref{theorem_cpwl}, the average and maximum value of $|XY - G_n(X,Y)|$ is $(\overline{X} - \underline{X})(\overline{Y} - \underline{Y}) / (16n^2)$ and $(\overline{X} - \underline{X})(\overline{Y} - \underline{Y}) / (48n^2)$, respectively. 
\qed
\end{proof}

\begin{remark}
\label{rem:finite_bounds}
This result highlights the need for the variables $X$ and $Y$ to have finite bounds. Without finite bounds, no finite approximation error can be derived.
\end{remark}

\subsection{Extension to indefinite quadratic functions}
\label{sec:indefinite_case}

More generally, the results above can be extended to any indefinite quadratic surfaces of the form 
\[
Z = q(X,Y)=aX^2+bXY+cY^2+dX+eY+f
\]
verifying 
\[
\det(\nabla^2 q) = \det\begin{pmatrix}
2a & b \\
b & 2c
\end{pmatrix} = 4ac-b^2 < 0
\]

Indeed, if $4ac-b^2 < 0$, there is an invertible transformation matrix $\bm{B}$ such that $(x,y) \in [0,1]^2$, $z = xy$, and
\begin{equation}
\label{eq:transf_matrix}
\begin{bmatrix}
X \\ Y \\Z \\ 1
\end{bmatrix} = \bm{B}
\begin{bmatrix}
x \\ y \\z \\ 1
\end{bmatrix}
\end{equation}

Consequently, a \gls{cpwl} approximation of $Z = q(X,Y)$ can be incorporated into the \gls{milp} formulation by adding equation~(\ref{eq:transf_matrix}). However, for the sake of simplicity, this extension is beyond the scope of the present manuscript and is therefore not considered in the rest of the article.

\subsection{Formulation size}

A size comparison of the three \gls{milp} formulations is provided in Table~\ref{tab:size_summary}. For all formulations, moving from the normalized case to the general case introduces only three additional continuous variables and three additional linear constraints. 

\begin{table}[htbp]
\centering
\caption{Size comparison of the \gls{milp} formulations of $z=g_n(x,y)$}
\label{tab:size_summary}
\begin{tabular}{p{2.9cm} p{2.1cm} p{2.9cm} p{2.1cm}}
\toprule
\textbf{\gls{milp} formulation} & \textbf{\# of binary variables} & \textbf{\# of continuous variables} & \textbf{\# of linear constraints} \\
\midrule
Triangle & $4n^2$ & $12n^2+1$ & $16n^2+4$ \\
Square & $2n(n+1)$ & $6n(n+1)+1$ & $10n(n+1)+4$ \\
DC & $4n$ & $12n+3$ & $14n+9$ \\
\midrule
General $X$ and $Y$ & +0 & +3 & +3 \\
\bottomrule
\end{tabular}
\end{table}

For the rest of the article we consider that the equation $Z=G_n(X,Y)$ represents the set of variables and linear equations (1)-(12) that defines $Z$ as the \gls{cpwl} approximation of $XY$.

\section{Sequentially Coupled Bilinear Programs}
\label{sec:ncqp}

This section introduces a specific class of optimization problems for which the \gls{cpwl}/\gls{milp} approximation formulated above proves to be computationally beneficial.
We call this class of optimization problems: ``Sequentially Coupled Bilinear Programs'', and we will refer to them as $SCBP$.

\subsection{General problem formulation}
The general formulation of the \gls{scbp} problem is described below.
\begin{align}
\label{eq:ob_qp}
\max \enspace & \sum_{t=1}^T w_t X_t Y_t \\
\label{eq:storage_balance}
\text{s.t.} \quad & S_t - S_{t-1} = I_t - X_t, \quad &t = 1,\dots,T \\
\label{eq:storage_elevation}
& Y_t = h(S_t), \quad &t = 1,\dots,T \\
\label{eq:X_bound_t}
& X_t \in [\underline{X}_t,\overline{X}_t], \quad &t = 1,\dots,T \\
\label{eq:Y_bound_t}
& Y_t \in [\underline{Y}_t,\overline{Y}_t], \quad &t = 1,\dots,T
\end{align}
where $S_0$, $(I_t)_{t=1,\dots,T}$, and $(w_t)_{t=1,\dots,T}$ are static parameters, $h$ is a linear or \gls{cpwl} function, and $(X_t)_{t=1,\dots,T}$, $(Y_t)_{t=1,\dots,T}$, and $(S_t)_{t=1,\dots,T}$ are continuous decision variables.

More specifically, this class of problems is characterized by the following criteria:
\begin{itemize}
    \item The objective function (\ref{eq:ob_qp}) is a sum of bilinear terms.
    \item The variables that compose the bilinear terms have finite bounds (\ref{eq:X_bound_t}), (\ref{eq:Y_bound_t}).
    \item The constraints of the problem are linear.
    \item The problem includes a sequential coupling constraint (\ref{eq:storage_balance}) that governs the state evolution over time.
    \item For each bilinear term, one variable ($Y_t$) is a function of the system state, while the other ($X_t$) dictates the change in that state.
\end{itemize}

A concrete example of this class of problems is optimal hydropower scheduling. In this context, the variables $X_t$, $Y_t$, and $S_t$ represent the water release, the hydraulic head, and the reservoir storage, respectively. The parameters $w_t$ and $I_t$ represent the market value of hydropower production and the exogenous water inflow into the reservoir. 
Another relevant application is dynamic pricing in inventory revenue management. Here, the variables $X_t$, $Y_t$, and $S_t$ represent the sales volume, the unit selling price, and the remaining inventory level, respectively. The price $Y_t$ is dynamically adjusted based on the available inventory $h(S_t)$ to capture scarcity pricing, while the parameters $w_t$ and $I_t$ represent the time-discount factor for revenue and the scheduled inventory restock quantities.

For sake of simplicity, we assume for the rest of the article that equation (\ref{eq:storage_elevation}) is a linear constraint. In that case, $SCBP$ is composed of $3T$ continuous variables, $2T$ linear constraints, and has an objective function composed of $T$ bilinear terms.

\subsection{\gls{cpwl} approximation of the problem}

The \gls{scbp} problem described above can be converted to a \gls{milp} problem by applying the \gls{cpwl} approximation described in Section \ref{sec:general_case} to each bilinear term of the objective function.

Concretely, the objective function must be replaced by equation (\ref{eq:obj_pwl}). For the rest of the article, we will refer to the system of equation (\ref{eq:obj_pwl}), (\ref{eq:storage_balance})-(\ref{eq:Y_bound_t}) as $MILP(n)$.
\begin{align}
\label{eq:obj_pwl}
\max \quad & \sum_{t=1}^T {w_t G_n(X_t, Y_t)}
\end{align}

Technically speaking, equation (\ref{eq:obj_pwl}) is actually composed of the linear objective function $\sum_{t=1}^T {w_t Z_t}$ and the linear constraints linking $X_t$, $Y_t$, $Z_t$ described in Section \ref{sec:milp_formulation}.
The number of binary variables, continuous variables, and linear constraints of $MILP(n)$ depends on the \gls{milp} formulation of $Z_t=G_n(X_t,Y_t)$ and is summarized in Table \ref{tab:size_MILP_n}.

\begin{table}[htbp]
\centering
\caption{Size comparison of $SCBP$ and $MILP(n)$}
\label{tab:size_MILP_n}
\begin{tabular}{p{2.8cm} p{2.0cm} p{2.6cm} p{2.3cm}}
\toprule
\textbf{Problem formulation} & \textbf{\# of binary variables} & \textbf{\# of continuous variables} & \textbf{\# of linear constraints} \\
\midrule
$SCBP$ & $0$ & $3T$ & $2T$ \\
\midrule
$MILP(n)$, Triangle & $4n^2T$ & $(12n^2+7)T$ & $(16n^2+9)T$ \\
$MILP(n)$, Square & $2n(n+1)T$ & $(6n(n+1)+7)T$ & $(10n(n+1)+9)T$ \\
$MILP(n)$, DC & $4nT$ & $(12n+9)T$ & $(14n+14)T$ \\
\bottomrule
\end{tabular}
\end{table}

\subsection{Feasibility and quality of the \gls{cpwl} solution}

Because $SCBP$ and $MILP(n)$ share the same feasible region, a feasible solution of $MILP(n)$ is necessarily a feasible solution of $SCBP$. However, a feasible solution may not have the same objective value in $MILP(n)$ and $SCBP$.

Let $\bm{S} = (X_t,Y_t,S_t)_{t=1,...,T}$ be a feasible solution. Let $F_{MILP(n)}(\bm{S})$ and $F_{SCBP}(\bm{S})$ be the objective function value of the solution $\bm{S}$ for $MILP(n)$ and $SCBP$.

\begin{theorem}
\label{th:epsilon_n}
     For a given feasible solution $\bm{S}$, the maximum difference $\varepsilon_n$ between $F_{MILP(n)}(\bm{S})$ and $F_{SCBP}(\bm{S})$ is:
     \[
     \varepsilon_n = \frac{1}{16n^2}\sum_{t=1}^T |w_t| |\overline{X_t}-\underline{X_t}||\overline{Y_t}-\underline{Y_t}|
     \]
\end{theorem}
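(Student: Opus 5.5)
The plan is to write the objective gap as a weighted sum of per-period approximation errors and bound each one with Theorem~\ref{th:approx_error}. For a feasible $\bm{S}$,
\[
F_{MILP(n)}(\bm{S}) - F_{SCBP}(\bm{S}) = \sum_{t=1}^T w_t\bigl(G_n(X_t,Y_t) - X_tY_t\bigr).
\]
The triangle inequality then gives
\[
|F_{MILP(n)}(\bm{S}) - F_{SCBP}(\bm{S})| \le \sum_{t=1}^T |w_t|\,|X_tY_t - G_n(X_t,Y_t)|.
\]
Theorem~\ref{th:approx_error}, whose proof reduces to Theorem~\ref{theorem_cpwl} through the identity $XY-G_n(X,Y)=(\overline{X}-\underline{X})(\overline{Y}-\underline{Y})(xy-g_n(x,y))$, bounds each summand by $|w_t|\,|\overline{X_t}-\underline{X_t}|\,|\overline{Y_t}-\underline{Y_t}|/(16n^2)$. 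Summing over $t$ shows that the gap never exceeds $\varepsilon_n$.

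Next I would show that $\varepsilon_n$ is the maximum, not just an upper bound, by exhibiting points where equality holds. The proof of Theorem~\ref{theorem_cpwl} shows that on each square $D_{n,j,k}$ the error is $(x-x_0)(y-y_0)$, with magnitude $1/(16n^2)$ at the midpoints of the edges. By Lemma~\ref{max_R0}, the product has one sign on two opposite edges and the other sign on the remaining two. So for each $t$ I can choose a normalized point $(x_t,y_t)$ at which $w_t(g_n-f)$ has a prescribed common sign and magnitude $|w_t|/(16n^2)$. Once every term has the same sign, the triangle inequality is tight.

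The main obstacle is that these maximizing points may not all be jointly feasible. The coupling constraints (\ref{eq:storage_balance})--(\ref{eq:storage_elevation}) tie $Y_t=h(S_t)$ to the cumulative releases, so the pairs $(X_t,Y_t)$ cannot be chosen freely. I would therefore state the result as a quantity maximized over $(X_t,Y_t)$ in the bounding boxes, matching the phrase ``maximum difference'' in the statement, which is what the per-term worst-case analysis yields. With that reading, attainment only needs independent choices in each box, and I would note that the sign can be aligned term by term. Under the stricter reading, where attainment must hold over the SCBP feasible set, the argument gives $\varepsilon_n$ only as a sharp worst-case upper bound.
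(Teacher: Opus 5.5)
Your upper-bound argument (write the gap as $\sum_t w_t(G_n(X_t,Y_t)-X_tY_t)$, apply the triangle inequality, and bound each term by $|w_t|(\overline{X_t}-\underline{X_t})(\overline{Y_t}-\underline{Y_t})/(16n^2)$ via Theorem~\ref{th:approx_error}) is exactly the paper's proof. The paper establishes only this upper bound and never addresses attainment, so your sharpness discussion goes beyond it: your observation is correct that $\varepsilon_n$ is attained when each $(X_t,Y_t)$ ranges freely over its box (by aligning signs term by term), but need not be attained over the coupled SCBP feasible set, and this is a fair caveat on the word ``maximum'' in the statement.
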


\begin{proof}
 \begin{align*}
&|F_{MILP(n)}(\bm{S}) - F_{SCBP}(\bm{S})| \\ &= \left| \sum_{t=1}^T {w_t G_n(X_t, Y_t)} -  \sum_{t=1}^T {w_t X_t Y_t}\right| 
 = \left| \sum_{t=1}^T {w_t (G_n(X_t, Y_t) - X_t Y_t)}\right| 
 \\ &\leq \sum_{t=1}^T |w_t| |G_n(X_t, Y_t) - X_t Y_t| \leq \frac{1}{16n^2}\sum_{t=1}^T |w_t| |\overline{X_t}-\underline{X_t}||\overline{Y_t}-\underline{Y_t}|
\end{align*} 
\qed
\end{proof}

\begin{remark}
    Note that $\varepsilon_n$ only depends on the input parameters of the problem and not on the solution $\bm{S}$. In addition, the difference between $F_{MILP(n)}(\bm{S})$ and $F_{SCBP}(\bm{S})$ can be arbitrarily close to zero by increasing the degree of accuracy $n$.
\end{remark}

\begin{theorem}
\label{th:epsilon_n_gap}
If a \gls{milp} solver identifies a feasible solution $\bm{S}$ with an absolute optimality gap $Gap_{MILP(n)}^{abs}$, then the gap $Gap_{SCBP}^{abs}$ in the objective value between the identified solution and the optimal $SCBP$ solution verifies:
\[
Gap_{SCBP}^{abs} \leq Gap_{MILP(n)}^{abs} + 2\varepsilon_n
\]
\end{theorem}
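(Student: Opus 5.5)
We argue with a standard sandwich built from Theorem~\ref{th:epsilon_n}. Since $SCBP$ and $MILP(n)$ have the same feasible region, Theorem~\ref{th:epsilon_n} applies to every feasible point. For each feasible $\bm{S}'$ it gives
\[
F_{SCBP}(\bm{S}') - \varepsilon_n \leq F_{MILP(n)}(\bm{S}') \leq F_{SCBP}(\bm{S}') + \varepsilon_n .
\]
Denote by $F^*_{SCBP}$ and $F^*_{MILP(n)}$ the optimal values of the two (maximization) problems. Both optima are attained, because the feasible region is compact by the finite bounds and both objectives are continuous ($G_n$ is CPWL). Even without attainment, the argument below works with suprema.

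\textbf{Step 1: compare the optimal values.} Let $\bm{S}^*$ be optimal for $SCBP$. Feasibility of $\bm{S}^*$ in $MILP(n)$ gives
\[
F^*_{MILP(n)} \geq F_{MILP(n)}(\bm{S}^*) \geq F_{SCBP}(\bm{S}^*) - \varepsilon_n = F^*_{SCBP} - \varepsilon_n ,
\]
that is, $F^*_{SCBP} \leq F^*_{MILP(n)} + \varepsilon_n$.

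\textbf{Step 2: relate the solver gap to the incumbent.} The solver returns $\bm{S}$ with absolute gap $Gap^{abs}_{MILP(n)} = UB - F_{MILP(n)}(\bm{S})$, where $UB \geq F^*_{MILP(n)}$ is the solver's dual bound. Hence
\[
F^*_{MILP(n)} \leq F_{MILP(n)}(\bm{S}) + Gap^{abs}_{MILP(n)} .
\]
The only interpretive point is how "absolute optimality gap" is defined. Taking it as the distance between the incumbent and a valid upper bound, or alternatively as $F^*_{MILP(n)} - F_{MILP(n)}(\bm{S})$, yields the same inequality, so either reading suffices.

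\textbf{Step 3: return to the true objective and chain.} Theorem~\ref{th:epsilon_n} applied to $\bm{S}$ gives $F_{MILP(n)}(\bm{S}) \leq F_{SCBP}(\bm{S}) + \varepsilon_n$. Combining the three inequalities,
\[
Gap^{abs}_{SCBP} = F^*_{SCBP} - F_{SCBP}(\bm{S}) \leq F^*_{MILP(n)} + \varepsilon_n - F_{MILP(n)}(\bm{S}) + \varepsilon_n \leq Gap^{abs}_{MILP(n)} + 2\varepsilon_n ,
\]
which is the claimed bound. The first $\varepsilon_n$ comes from Step~1, and the second from evaluating the incumbent $\bm{S}$.

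No step is technically hard. The only delicate points are interpreting the solver's gap as bounding $F^*_{MILP(n)} - F_{MILP(n)}(\bm{S})$ (Step~2), and making sure Theorem~\ref{th:epsilon_n} is used in both directions: once at the $SCBP$ optimum and once at the returned solution.
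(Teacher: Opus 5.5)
Your proposal is correct and follows essentially the same route as the paper: you apply Theorem~\ref{th:epsilon_n} once at the incumbent $\bm{S}$ to get $F_{SCBP}(\bm{S}) \geq F_{MILP(n)}(\bm{S}) - \varepsilon_n$, and once at a feasible point to get $F^*_{SCBP} \leq F^{UB}_{MILP(n)} + \varepsilon_n$, and then chain the two. The only cosmetic difference is that the paper bounds every feasible $\bm{S}'$ directly by the solver's upper bound $F^{UB}_{MILP(n)}$, so it needs neither the intermediate $F^*_{MILP(n)}$ nor your attainment argument.
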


\begin{proof}
    Let $\bm{S}$ be a feasible solution. Let $F_{MILP(n)}^{UB}$ be an upper bound of the objective function value of $MILP(n)$ and $Gap_{MILP(n)}^{abs} = F_{MILP(n)}^{UB} - F_{MILP(n)}(\bm{S})$.

    According to Theorem \ref{th:epsilon_n},
    \[
    F_{SCBP}(\bm{S}) \geq F_{MILP(n)}(\bm{S}) - \varepsilon_n 
    \]

    In addition, for any feasible solution $\bm{S'}$, 
    \[
    F_{SCBP}(\bm{S'}) \leq F_{MILP(n)}(\bm{S'}) + \varepsilon_n \leq  F_{MILP(n)}^{UB} + \varepsilon_n
    \]
    Therefore, 
    \[
    F_{SCBP}^{UB} \leq  F_{MILP(n)}^{UB} + \varepsilon_n
    \]

    Consequently,
    \begin{align*}
    Gap_{SCBP}^{abs} &= F_{SCBP}^{UB} - F_{SCBP}(\bm{S}) 
    \\ &\leq F_{MILP(n)}^{UB} + \varepsilon_n - F_{MILP(n)}(\bm{S}) + \varepsilon_n 
    \\ &= Gap_{MILP(n)}^{abs} + 2\varepsilon_n
    \end{align*}
    \qed
\end{proof}

\section{Numerical experiments}

This section illustrates the performances of the \gls{milp} formulations enabled by the \gls{cpwl} approximation of the bilinear term. In section \ref{sec:data_desc}, the problem instances, mathematical formulations, solvers, \gls{cpwl} representations and degrees of accuracy used to evaluate the computational performances are described. In section \ref{sec:exp_results}, the experiment results are compared in terms of solve status, solve time, and optimality gap. The code used for the computational experiments can be found at \cite{Ploussard2026PWLReformulations}.

The algorithms are implemented in Python (3.12.10) and the problems are modeled with OR-Tools (9.14.6206).  Models are run on an Intel 2.60-GHz machine with 14 cores and 32 GB of RAM. 

\subsection{Description of the experiments}

\label{sec:data_desc}

The \gls{milp} formulations are tested over two sets of optimization problems: 
\begin{itemize}
    \item a subset of valid QPLIB instances
    \item a set of randomly-generated \gls{scbp} instances
\end{itemize}

\subsubsection{QPLIB instances}

The QPLIB library \cite{Furini2019QPLIB} contains a collection of 453 \gls{qp} problem instances of various types, including but not limited to: \gls{qp}, QCP, MIQP, \gls{miqcp}, Nonconvex \gls{qp}. More details are provided in \cite{Furini2019QPLIB}.

We select a subset of problem instances of reasonable size that exhibit the properties targeted by our method. More specifically, we select a subset of problem instances that verify the following conditions:
\begin{enumerate}[label=\alph*)]
    \item Problems are composed of at most 200 variables and 200 constraints
    \item All quadratic terms are bilinear (only $XY$ terms and no $X^2$ terms)
    \item All quadratic terms are only composed of continuous variables
    \item All variables involved in quadratic terms have finite bounds
\end{enumerate}
Condition (a) is used to down-select problem instances of reasonable size that can be solved in a reasonable amount of time. Furthermore, the method proposed here has been specifically designed to address quadratic terms verifying conditions (b) and (c). Finally, condition (d) is necessary to apply the variable rescaling method described in Section \ref{sec:general_case}. As noted in Remark~\ref{rem:finite_bounds}, without finite bounds, no finite approximation error could be derived.

Note that condition (b) could be replaced by the weaker condition that quadratic terms can be grouped into indefinite quadratic functions $aX^2+bXY+cY^2+dX+eY+f$ as seen in Section \ref{sec:indefinite_case}. However, identifying valid indefinite quadratic functions from a linear combinations of quadratic terms ($X_iX_j$, $X_i^2$) can be challenging. For that reason, and for sake of simplicity, the stronger condition (b) is enforced in the down-selection.

The down-selection results in a subset of 15 instances of QPLIB problems.

\subsubsection{Random instances of $SCBP$ problems}

In addition to the 15 QPLIB instances, the \gls{cpwl} reformulation method proposed here is tested on various instances of the \gls{scbp} described in Section \ref{sec:ncqp}. The \gls{scbp} problems are built using randomly-generated values of the static parameters $S_0$, $(I_t)_{t=1,...,T}$, and $(w_t)_{t=1,...,T}$ and multiple sequence durations $T$. More specifically, the $SCBP$ problems are built using:
\begin{itemize}
    \item 10 random seeds to generate the parameter values
    \item 4 sequence durations ($T=24,48,96,168$)
\end{itemize}
This results in 40 instances of \gls{scbp} problem. Additionally, to ensure computational reproducibility, the randomly generated instances are initialized with predefined seeds \cite{Ploussard2026PWLReformulations}.

\subsubsection{Additional experiment dimensions}

In total, the method is tested on 55 problem instances: 15 QPLIB instances, and 40 random instances of \gls{scbp} problem. 

In addition to the problem instances, 4 extra dimensions are considered for these experiments, namely: 
\begin{itemize}
    \item the problem type: \gls{qp}, \gls{milp}
    \item the solver used: Gurobi, SCIP, HiGHS
    \item the \gls{cpwl} representation: Triangle, Square, DC
    \item the degree of accuracy $n$
\end{itemize}

For each instance, the original \gls{qp} and approximated \gls{milp} formulations are both solved. The \gls{qp} problems are solved using the Gurobi and SCIP solvers, as the HiGHS solver does not currently support Nonconvex \gls{qp} problems. The \gls{milp} problems are solved using the Gurobi, SCIP, and HiGHS solvers. Each \gls{milp} problem is solved using the 3 \gls{cpwl} representations (Triangle, Square, and DC) with 4 degrees of accuracy ($n=1,2,3,4$). This results in a total number of 2,090 numerical experiments. The numerical experiments follow the factorial design shown in Figure \ref{fig:case_studies}. 

For all experiments, the relative optimality gap is set to 1\%, and the time limit is set to 600 seconds. To minimize computational time, the 2,090 experiments are run in parallel, allowing only one thread by experiment regardless of the solver \cite{Ploussard2026PWLReformulations}.

\begin{figure}
    \centering
    \includegraphics[width=1.0\linewidth]{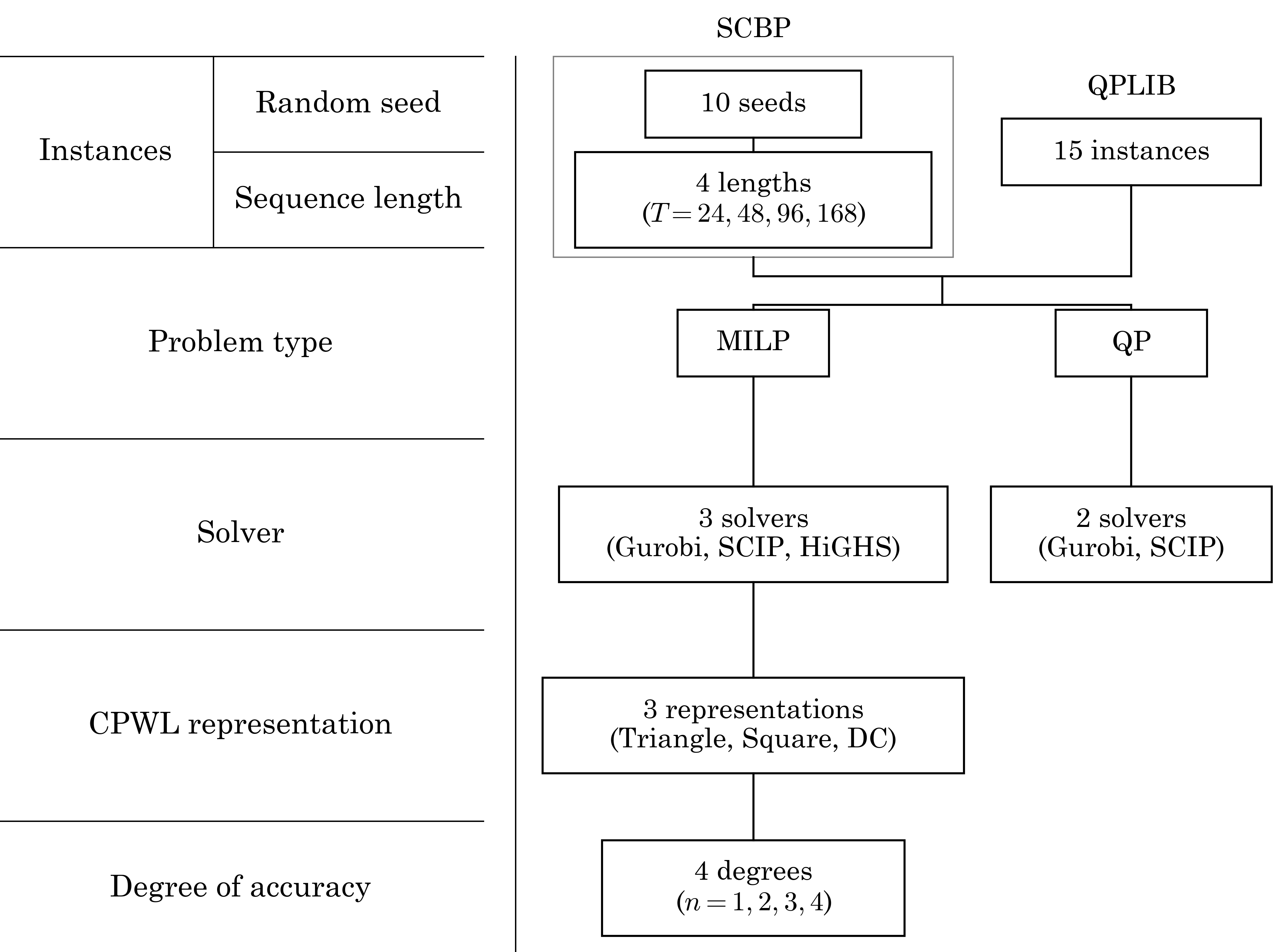}
    \caption[Dimensions and diagram of the numerical experiments]{Experimental design tree for the numerical study. Vertical connections indicate factorial combinations of dimensions, while horizontal splits indicate alternative branches whose experiment counts are summed}
\label{fig:case_studies}
\end{figure}

\subsection{Experiment results}
\label{sec:exp_results}

\subsubsection{Performances on SCBP instances}

Figure \ref{fig:NCQP_results} illustrates the average solve time for the \gls{scbp} instances for each sequence length $T$, problem type, solver, \gls{cpwl} representation, and degree of accuracy $n$. An average solve time equal to the time limit indicates that none of the experiments in the group reached the target optimality gap within the allotted time.

\begin{figure}
    \centering
    \includegraphics[width=1.0\linewidth]{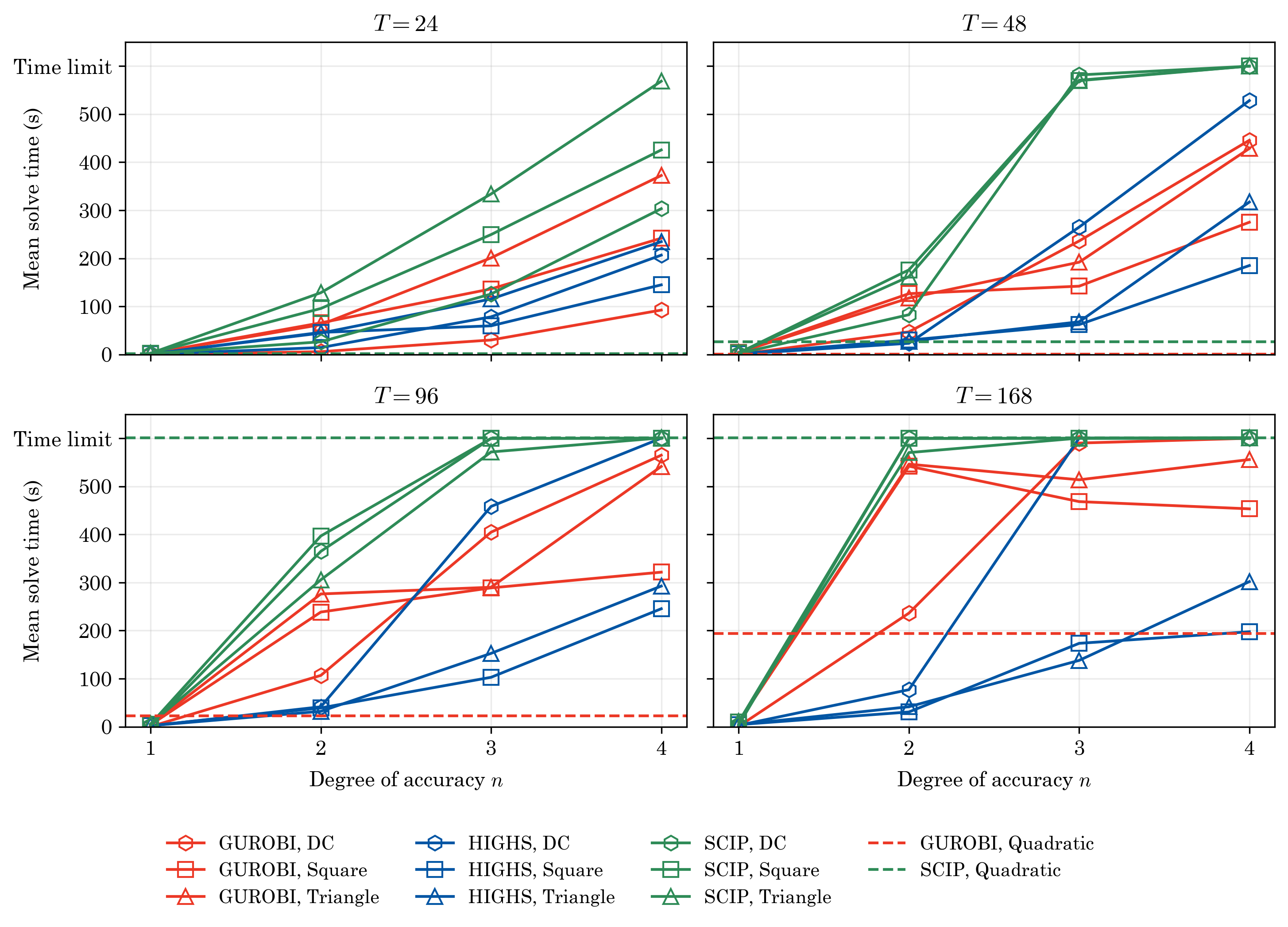}
    \caption[Average solve time of the SCBP problems]{Average solve time of the SCBP problems}
\label{fig:NCQP_results}
\end{figure}

For $T \leq 48$, both \gls{qp} solvers reach optimality within a few seconds on average, whereas the corresponding \gls{milp} formulations require longer solve times regardless of the solver used. Thus, for short sequences, the \gls{cpwl} approximation introduces unnecessary overhead and is computationally inefficient.

A different pattern emerges for longer sequences. For $T \geq 96$, SCIP is no longer able to solve the \gls{qp} instances to optimality, while HiGHS successfully solves the \gls{milp} formulations even at high degrees of accuracy. For $T=168$, the performance of HiGHS on the \gls{milp} reformulation becomes comparable to that of Gurobi on the original \gls{qp} for $n=3,4$, and noticeably better for $n=2$. This suggests that the \gls{cpwl} reformulation becomes relevant for longer sequences, where the original \gls{qp} formulation becomes more difficult for some solvers. 

Interestingly, HiGHS frequently outperforms Gurobi on the \gls{milp} formulations, underscoring the increasing maturity of open-source \gls{milp} solvers. However, since Gurobi remains faster on the original \gls{qp} formulation whenever that formulation is tractable, the \gls{cpwl} reformulation should be viewed primarily as an alternative for instances where direct \gls{qp} solution becomes inefficient or fails to reach optimality.

More generally, the strong \gls{milp} performances are largely driven by the Square representation of $g_n$. The Dolan--Moré performance profiles in Figure~\ref{fig:dolan_more} support this observation. With HiGHS, the Square representation performs best, solving more than 80\% of \gls{scbp} instances at the best observed runtime and almost all instances within a small performance ratio. The Triangle formulation is slower but remains relatively competitive, while the \gls{dc} formulation requires substantially larger performance ratios to solve comparable fractions of instances. The relatively poor performance of the \gls{dc} representation is somewhat surprising given the compactness of its \gls{milp} formulation, as reported in Table~\ref{tab:size_summary}. One possible explanation is that this compactness comes at the cost of a weaker relaxation. Specifically, because the \gls{dc} formulation represents $g_n$ as the difference of two \gls{cpwl} functions, the approximation errors and relaxation gaps associated with each component may compound in the \gls{lp} relaxation solved at each branch-and-bound node. As a result, the \gls{dc} formulation may yield weaker bounds than the Square and Triangle formulations, despite requiring fewer variables and constraints. In comparison, the superior performance of the Square formulation relative to the Triangle formulation is more intuitive. The two formulations have a similar structure, but the Square representation uses roughly half as many linear pieces. This reduction in the number of domains appears to outweigh the additional linear inequality required to describe each square domain, leading to a more efficient \gls{milp} formulation overall. Given these results, the remainder of the experimental section focuses on the Square representation. This allows the subsequent analysis to concentrate on the best-performing \gls{cpwl} representation.

\begin{figure}
    \centering
    \includegraphics[width=1.0\linewidth]{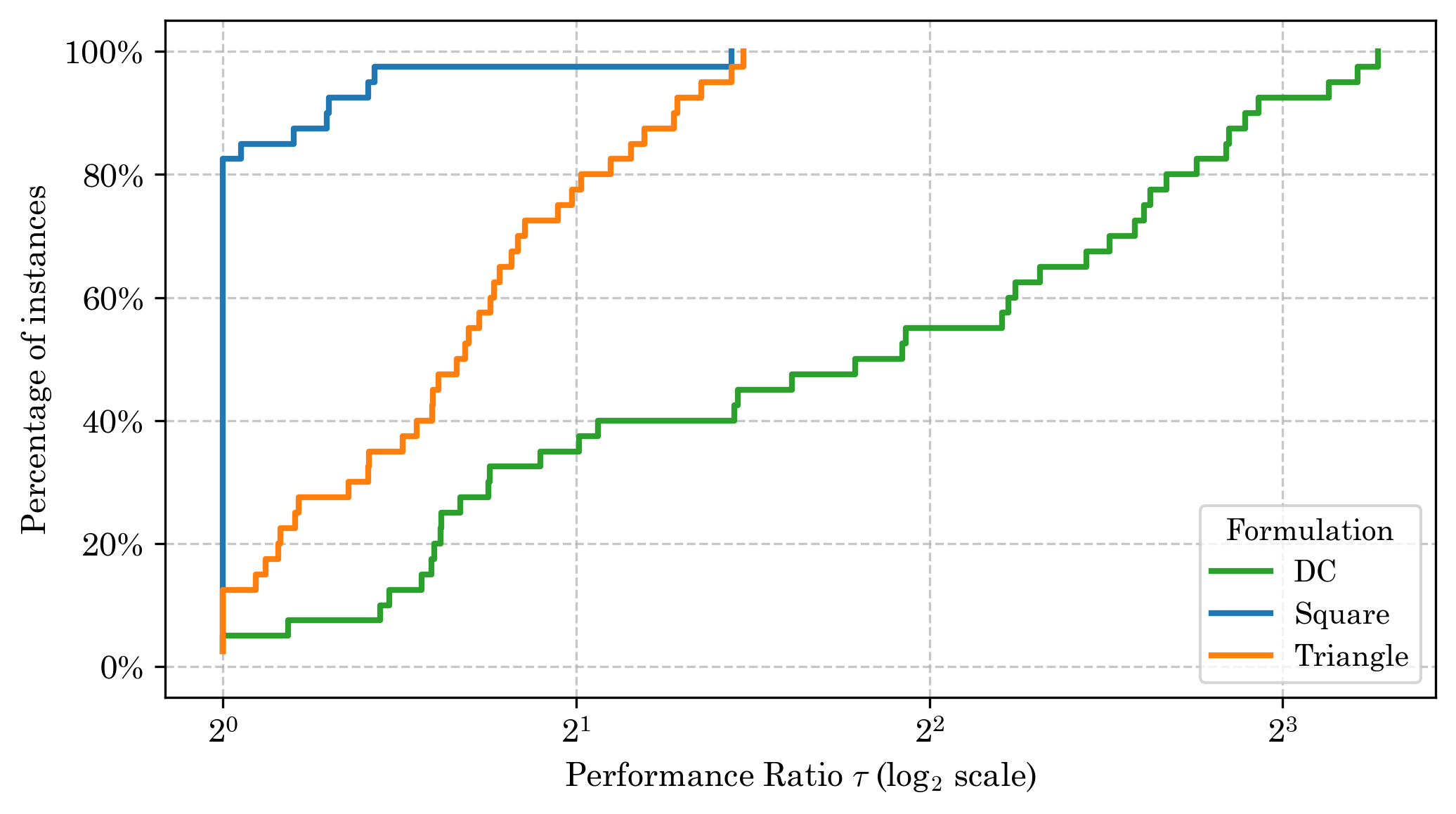}
    \caption[Dolan-Moré performance profiles of the \gls{cpwl} representations for the SCBP instances (HiGHS)]{Dolan-Moré performance profiles of the \gls{cpwl} representations for the SCBP instances (HiGHS)}
\label{fig:dolan_more}
\end{figure}

Figure~\ref{fig:scalability} reports the percentage of \gls{scbp} instances solved to optimality as a function of the sequence length. The solve rate of the open-source \gls{qp} solver SCIP falls sharply to zero for $T \geq 96$. In contrast, even with a relatively high approximation accuracy of $n=4$, the open-source \gls{milp} solver HiGHS maintains a largely stable solve rate as the sequence length increases. Notably, for $T=168$, HiGHS outperforms the commercial \gls{qp} solver Gurobi in terms of the percentage of instances solved to optimality. This highlights a practical advantage of the proposed \gls{cpwl} reformulation: by converting the original \gls{qp} into a \gls{milp}, large instances of the \gls{scbp} problem becomes accessible to mature open-source \gls{milp} solvers such as HiGHS. The results show that these solvers can provide strong computational performance on the reformulated \gls{milp} problem, while preserving, and in some cases improving, scalability for larger instances.

\begin{figure}
    \centering
    \includegraphics[width=1.0\linewidth]{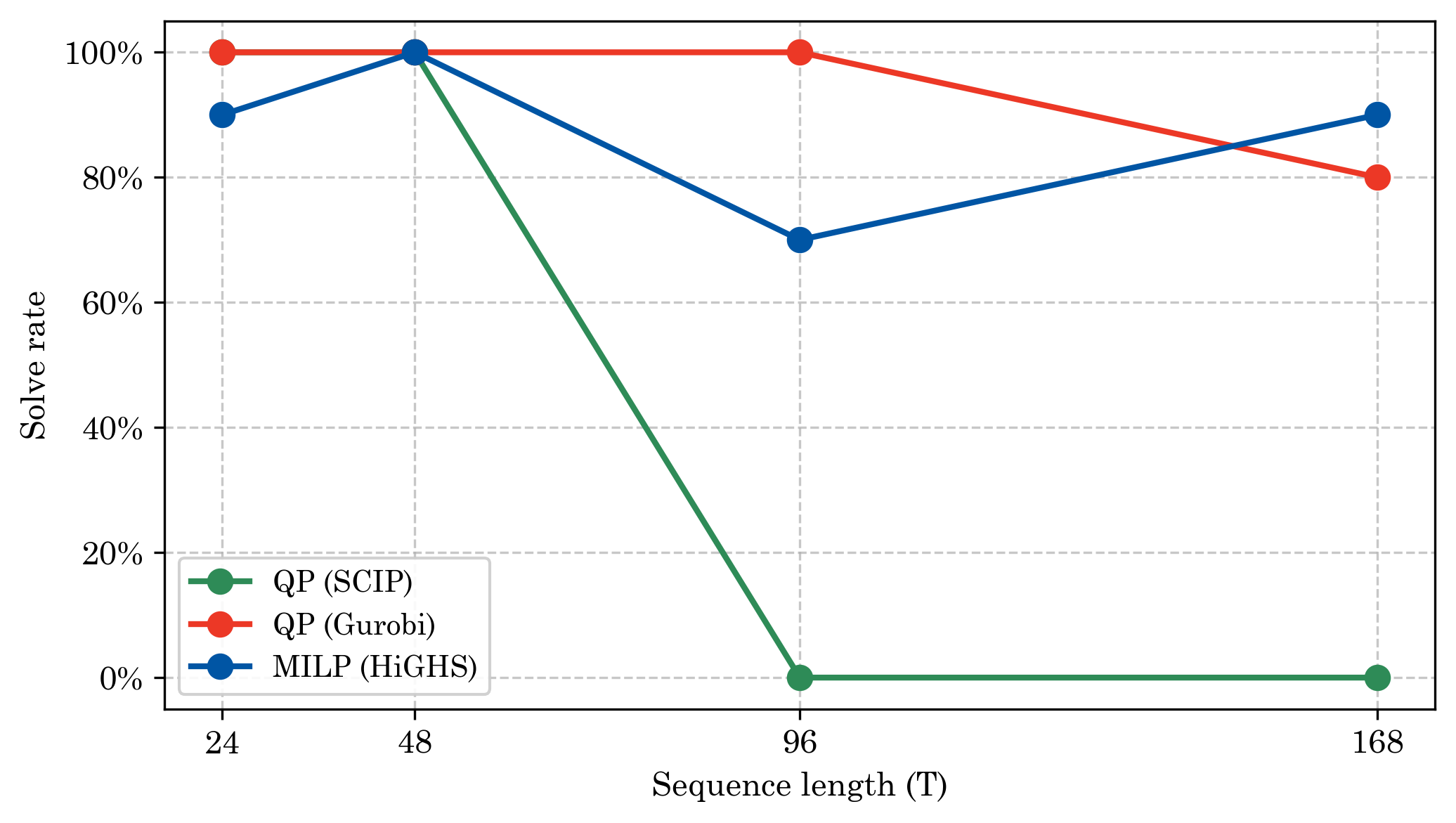}
    \caption[Solve rate of SCBP instances as a function of the sequence length ($n=4$)]{Solve rate of SCBP instances as a function of the sequence length ($n=4$)}
\label{fig:scalability}
\end{figure}

Beyond computational performance, it is important to verify that the \gls{cpwl} reformulation produces \gls{milp} solutions that remain accurate with respect to the original \gls{qp} formulation. To assess this, each \gls{milp} solution is evaluated by comparing the value of the \gls{milp} objective function to the value of the original quadratic objective function in \eqref{eq:ob_qp}. The corresponding objective \gls{cpwl} approximation gap is shown in Figure~\ref{fig:obj_approx} as a function of the degree of accuracy $n$.
The results show that the objective gap decreases rapidly as $n$ increases, consistent with the error bound established in Theorem~\ref{th:epsilon_n}. In particular, the average gap is approximately 1\% for $n=2$ and decreases to about 0.1\% for $n=4$. These results indicate that the \gls{cpwl} reformulation can achieve high objective accuracy with a relatively moderate number of linear pieces per bilinear term.

\begin{figure}
    \centering
    \includegraphics[width=1.0\linewidth]{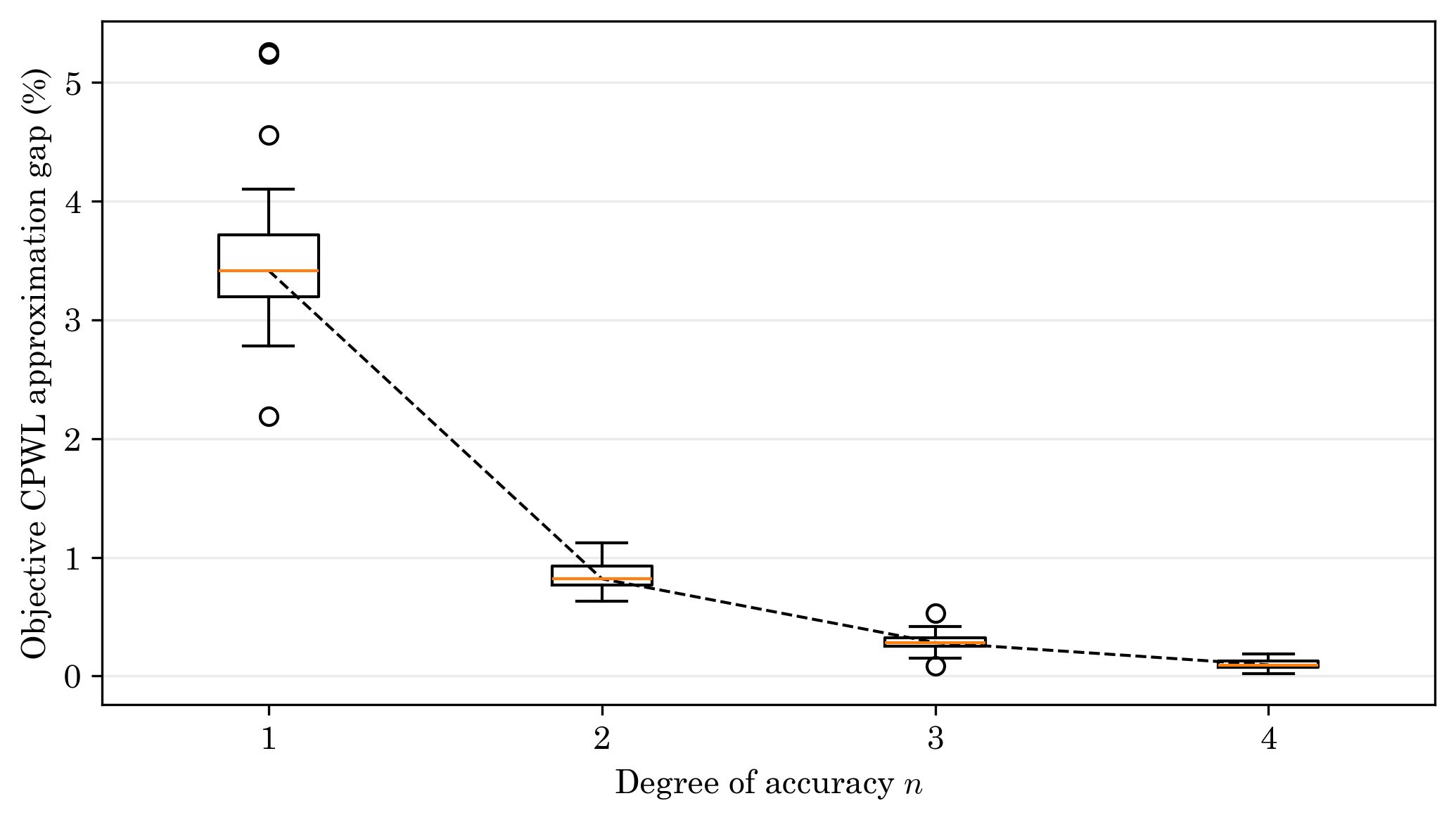}
    \caption[\gls{cpwl} approximation gap of the objective function depending on the degree of accuracy (HiGHS, Square)]{\gls{cpwl} approximation gap of the objective function depending on the degree of accuracy (HiGHS)}
\label{fig:obj_approx}
\end{figure}

\subsubsection{Performances on QPLIB instances}

The results obtained on the QPLIB instances contrast sharply with those observed on the \gls{scbp} instances. As shown in Table~\ref{tab:qplibb_results}, the \gls{milp} reformulation systematically underperforms the original \gls{qp} formulation in terms of optimality gap, with the exception of two instances: 2698 and 3385. These two cases, for which the \gls{milp} formulation achieves comparable or better performance, are highlighted in bold in Table~\ref{tab:qplibb_results}. However, even for these instances, the optimal \gls{milp} objective value differs substantially from the optimal \gls{qp} objective value, suggesting that the \gls{cpwl} approximation may not provide a faithful representation of the original \gls{qp} problem.

\begin{table}
\caption{Solver results for the \gls{qp} and \gls{milp} formulations of the QPLIB instances ($n=3$)}
\label{tab:qplibb_results}
\begin{tabular}{lrrrrrr}
\toprule
 & \multicolumn{3}{c}{\gls{qp} (SCIP)} & \multicolumn{3}{c}{\gls{milp} (HiGHS)} \\
 Instance & Primal & Dual & Gap (\%) & Primal & Dual & Gap (\%) \\
\midrule
0698 & 1927582.38 & 787778.12 & 59.13 & - & 705763.21 & - \\
1886 & -34.93 & -126.46 & 262.06 & -1.51 & -153.39 & 10039.34 \\
1913 & -49.96 & -56.22 & 12.53 & -1.62 & -73.63 & 4454.21 \\
1922 & -35.90 & -36.09 & 0.52 & -1.14 & -54.49 & 4693.84 \\
1931 & -39.42 & -69.16 & 75.44 & -1.18 & -91.16 & 7603.67 \\
1940 & -18.28 & -45.39 & 148.29 & 0.00 & -61.81 & $\infty$ \\
2445 & 324.32 & 321.45 & 0.88 & - & 188.67 & - \\
2698 & 1201.30 & 1190.90 & 0.87 & \textbf{18860.00} & \textbf{18671.64} & \textbf{1.00} \\
2834 & - & 365.00 & - & - & 311.32 & - \\
3089 & - & 312.62 & - & - & 215.77 & - \\
3225 & - & 511.28 & - & - & 466.67 & - \\
3385 & 587.78 & 582.06 & 0.97 & \textbf{225.00} & \textbf{225.00} & \textbf{0.00} \\
3387 & 408.32 & 396.39 & 2.92 & - & 275.84 & - \\
3814 & 0.63 & 0.63 & 0.00 & - & 0.29 & - \\
6287 & -2398.76 & -2422.72 & 1.00 & -2337.19 & -2511.26 & 7.45 \\
\bottomrule
\end{tabular}
\end{table}

Further inspection indicates that, in these instances, the variables involved in quadratic terms have relatively loose bounds, on the order of $10^6$. Such large ranges lead to a coarse and inaccurate \gls{cpwl} approximation. This highlights an important limitation of the proposed reformulation: as established in Theorems~\ref{th:approx_error} and~\ref{th:epsilon_n}, the accuracy of the method depends strongly on how tightly the variables appearing in each bilinear term are bounded.

These results clearly indicate that the proposed \gls{cpwl} reformulation is not a general-purpose replacement for \gls{qp} formulations. Rather, its computational benefits appear to be limited to specific problem classes, such as the \gls{scbp} problem considered above. For more general \gls{qp} problems, as illustrated by the QPLIB results, the reformulation can lead to both weaker optimality gaps and poor approximation quality, making the original \gls{qp} formulation preferable whenever it can be solved reliably.

Figure~\ref{fig:applicability} highlights the contrasting performances of the \gls{cpwl} reformulation on the \gls{scbp} and QPLIB instances. For the \gls{scbp} instances, the solve rate of the \gls{milp} reformulation is nearly twice that of the open-source \gls{qp} solver, and comparable to the commercial \gls{qp} solver. In contrast, for the QPLIB instances, the solve rate of the \gls{milp} reformulation is approximately five times lower than that of the open-source \gls{qp} solver.

\begin{figure}
    \centering
    \includegraphics[width=1.0\linewidth]{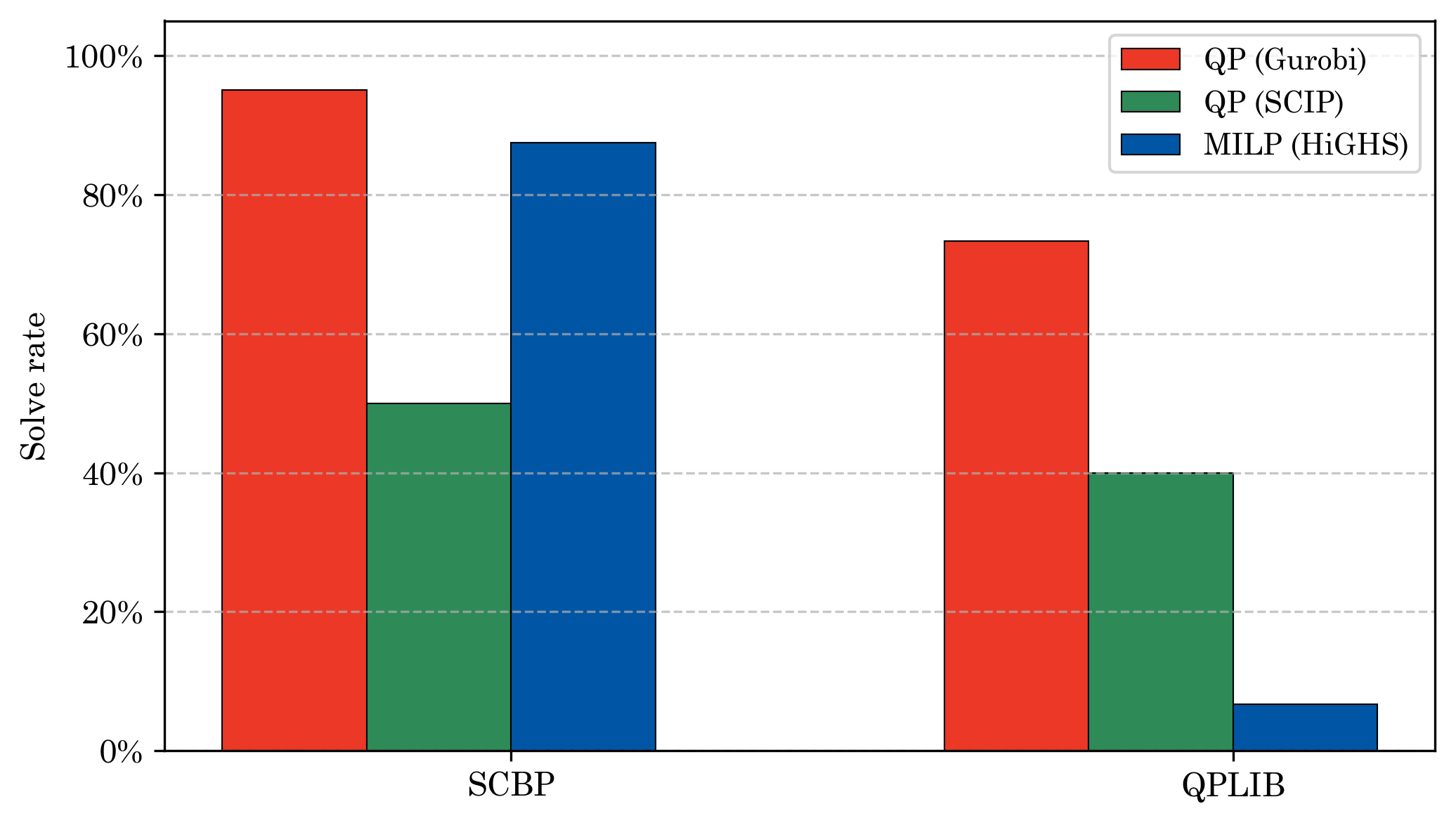}
    \caption[Comparison of solve rates between \gls{scbp} and QPLIB instances ($n=4$)]{Comparison of solve rates between \gls{scbp} and QPLIB instances ($n=4$)}
\label{fig:applicability}
\end{figure}

\section{Conclusion}

In this paper, we explored the specific conditions under which approximating nonconvex bilinear terms by \gls{cpwl} functions using \gls{milp} reformulations becomes highly advantageous over direct \gls{qp} methods. We demonstrated that the computational viability of this approach hinges on a distinct combination of the problem class, the tightness of the variable bounds, the chosen degree of accuracy, and the geometric structure of the \gls{milp} representation. To ground this comparison, we formalized the \gls{cpwl} function \( g_n \), analytically calculated its exact maximum and average approximation errors, and translated its geometry into three distinct \gls{milp} representations: the ``Triangle'', ``Square'', and ``\gls{dc}'' formulations.

Among these proposed formulations, our analyses reveal that the ``Square'' representation provides the optimal balance of compactness and relaxation tightness, outperforming the ``Triangle'' and ``\gls{dc}'' approaches in the vast majority of cases. However, testing on general QPLIB instances demonstrates that this \gls{milp} reformulation is not a universal replacement for direct \gls{qp} solvers. First, for problems with loose variable bounds, the \gls{cpwl} approximation becomes highly inaccurate, deviating significantly from the true \gls{qp} solution. Second, even when variable bounds are sufficiently tight, the \gls{milp} formulation fails to close the optimality gap on general QPLIB instances compared to state-of-the-art \gls{qp} solvers.

Conversely, these results highlight that the \gls{cpwl} reformulation is highly beneficial for specific problem classes like \gls{scbp} problems. For these problems, which inherently feature tight finite bounds and sequential state-coupling constraints, the ``Square'' \gls{milp} formulation scales exceptionally well. As the sequence horizon extends, this approach maintains a stable solve rate where traditional spatial branch-and-bound \gls{qp} solvers fail to converge. Ultimately, this geometric reformulation allows mature, open-source \gls{milp} solvers to efficiently tackle large-scale \gls{scbp} instances and outperform state-of-the-art commercial \gls{qp} solvers. 

Future work may explore extending these geometric approximation strategies to handle generalized indefinite quadratic functions or dynamic domain partitioning. Furthermore, several critical research gaps remain to be addressed. First, while we conjecture that the introduced function \( g_n \) represents the most efficient \gls{cpwl} approximation of the unit bilinear term with respect to the trade-off between the number of pieces and the approximation error, a formal mathematical proof of its optimality remains to be established. Second, a rigorous theoretical analysis is needed to thoroughly explain why the ``Square'' representation empirically outperforms the ``\gls{dc}'' formulation, despite the latter requiring fewer variables and constraints. Third, a stronger theoretical foundation is required to clarify the exact mathematical mechanisms that allow this \gls{milp} reformulation to scale better than direct \gls{qp} solvers on \gls{scbp} instances. Finally, identifying other distinct classes of \gls{qp} problems where this \gls{milp} reformulation yields a computational advantage over direct \gls{qp} solvers will be crucial for mapping the broader applicability and theoretical limitations of this framework.

\section*{Declarations}
\noindent\textbf{Funding} \\
This work was authored for the Department of Energy (DOE) Office of Critical Minerals and Energy Innovation by Argonne National Laboratory, operated by UChicago Argonne LLC under contract number DE-AC02-06CH11357. This study was supported by the HydroWIRES Initiative of DOE’s Hydropower and Hydrokinetics Office.

\vspace{0.5cm}

\noindent\textbf{Competing Interests} \\
The authors declare that they have no relevant financial or non-financial interests to disclose.

\vspace{0.5cm}

\noindent\textbf{Data Availability} \\
The datasets generated during and analyzed during the current study are available in the GitHub repository: \url{https://github.com/quentinplsrd/pwl-reformulations-of-xy}.

\vspace{0.5cm}

\noindent\textbf{Code Availability} \\
The code used for the computational experiments is available in the GitHub repository: \url{https://github.com/quentinplsrd/pwl-reformulations-of-xy}.

\vspace{0.5cm}

\noindent\fbox{%
    \begin{minipage}{\textwidth}
The submitted manuscript has been created by UChicago Argonne, LLC, Operator of Argonne National Laboratory (“Argonne”). Argonne, a U.S. Department of Energy Office of Science laboratory, is operated under Contract No. DE-AC02-06CH11357.
The U.S. Government retains for itself, and others acting on its behalf, a paid-up nonexclusive, irrevocable worldwide license in said article to reproduce, prepare derivative works, distribute copies to the public, and perform publicly and display publicly, by or on behalf of the Government. The Department of Energy will provide public access to these results of federally sponsored research in accordance with the DOE Public Access Plan. http://energy.gov/downloads/doe-public-access-plan
    \end{minipage}%
}

%


%
%

\bibliographystyle{spmpsci}      
\bibliography{references.bib}   


\end{document}


\maketitle

\begin{center}
  \textbf{Journal: Mathematical Programming Computation}
\end{center}

\section{Overview}

This document provides supplementary theoretical results for the article
``When MILP Beats QP: Piecewise-Linear Reformulations of Sequentially Coupled Bilinear Programs''

The supplementary material is organized as follows.
Section~\ref{sec:proofs} provides additional proofs.
Section~\ref{sec:formulation} describes the detailed MILP formulations of the ``Square'' and ``DC'' representation of the function $g_n$.

\section{Additional theoretical results}
\label{sec:proofs}

\subsection{Proof of co-planarity of the adjacent triangular pieces}

Let $(x_i,y_i)_{i = 1,2,3,4}$ be the four vertices of the square domain $D_{n,j,k}$. We prove here that the points $(x_i,y_i,x_iy_i)_{i=1,2,3,4} = \left\{ P_1, P_2, P_3, P_4 \right\}$ are co-planar.

\begin{proof}
    Let $(x_0,y_0)$ be center of the square domain $D_{n,j,k}$. We have: 
    \[
    (x_0,y_0) = \left(\frac{j-k+n}{2n},\frac{j+k+1-n}{2n}\right)
    \]
    The four vertices of $D_{n,j,k}$ are:
    \begin{align*}
        (x_1,y_1) &= \left(x_0 - \frac{1}{2n}, y_0\right) \\
        (x_2,y_2) &= \left(x_0, y_0 + \frac{1}{2n}\right) \\
        (x_3,y_3) &= \left(x_0, y_0 - \frac{1}{2n}\right) \\
        (x_4,y_4) &= \left(x_0 + \frac{1}{2n}, y_0\right)
    \end{align*}

   We can show that the four points $P_i$ are co-planar by showing that the vectors $\overrightarrow{P_1P_2}$ and $\overrightarrow{P_3P_4}$ are identical.
    
    \[
    \overrightarrow{P_1P_2} 
    = \left[ \begin{array}{c} x_2 - x_1 \\ y_2 - y_1 \\ x_2y_2 - x_1y_1 \end{array} \right]
    = \left[ \begin{array}{c} x_0 - x_0 + \frac{1}{2n} \\ y_0 + \frac{1}{2n} - y_0 \\ x_0y_0 + \frac{1}{2n}x_0 - x_0y_0 + \frac{1}{2n}y_0 \end{array} \right]
    = \left[ \begin{array}{c} \frac{1}{2n} \\ \frac{1}{2n} \\ \frac{1}{2n}(x_0+y_0) \end{array} \right]
    \]
    
    \[
    \overrightarrow{P_3P_4} 
    = \left[ \begin{array}{c} x_4 - x_3 \\ y_4 - y_3 \\ x_4y_4 - x_3y_3 \end{array} \right]
    = \left[ \begin{array}{c} x_0 + \frac{1}{2n} - x_0  \\ y_0  - y_0 + \frac{1}{2n} \\ x_0y_0 + \frac{1}{2n}y_0 - x_0y_0 + \frac{1}{2n}x_0 \end{array} \right]
    = \left[ \begin{array}{c} \frac{1}{2n} \\ \frac{1}{2n} \\ \frac{1}{2n}(x_0+y_0) \end{array} \right]
    \]
\qed
\end{proof}

\subsection{Proof of the expression of $g_n$ on $D_{n,j,k}$}
\begin{proof}
First, we calculate $\overrightarrow{u}$, a normal vector to the plane $\bm{\Pi}$  defined by the four points $P_1$,$P_2$,$P_3$, and $P_4$. 
\begin{align*}
\overrightarrow{u} &= \overrightarrow{P_1P_2} \times \overrightarrow{P_1P_3} = \left[ \begin{array}{c} \frac{1}{2n} \\\frac{1}{2n} \\ \frac{1}{2n}(x_0+y_0) \end{array} \right] \times \left[ \begin{array}{c} \frac{1}{2n} \\ -\frac{1}{2n} \\ \frac{1}{2n}(y_0-x_0) \end{array} \right]
\\&= \frac{1}{(2n)^2}\left[ \begin{array}{c} (y_0-x_0)+(x_0+y_0) \\ (x_0+y_0) - (y_0-x_0) \\ -1-1 \end{array} \right]
= \frac{1}{2n^2}\left[ \begin{array}{c} y_0 \\ x_0 \\ -1 \end{array} \right]
\end{align*}

Therefore, the equation of the plane $\bm{\Pi}$ is:
\[
y_0x+x_0y-z=C
\]
Because $P_1 \in \bm{\Pi}$, we have:
\begin{align*}
&y_0x_1+x_0y_1-x_1y_1=C \\
\Leftrightarrow \enspace &y_0\left(x_0-\frac{1}{2n}\right)+x_0y_0-\left(x_0-\frac{1}{2n}\right)y_0=C \\
\Leftrightarrow \enspace &C = x_0y_0
\end{align*}

As a result, the expression of $g_n$ on $D_{n,j,k}$ is:
\begin{align*}
&g_n(x,y) = y_0x+x_0y - x_0y_0 \\
= &\left(\frac{j+k+1-n}{2n}\right)x + \left(\frac{j-k+n}{2n}\right)y - \left(\frac{j-k+n}{2n}\right)\left(\frac{j+k+1-n}{2n}\right)
\end{align*}
\qed
\end{proof}

\subsection{Proof of the DC expression of $g_n$}

\begin{proof} Let $g_n^+$ and $g_n^-$ be defined as:
\begin{align*}
&g_n^+(x,y)
= \max_{0 \leq j < 2n} g_{n,j}^+(x,y) \\
&= \max_{0 \leq j < 2n}
\left[
\left( \frac{2j+1}{4n}\right)x
+
\left( \frac{2j+1}{4n}\right)y
-
\frac{j(j+1)}{4n^2}
\right] \\
&g_n^-(x,y)
= \max_{0 \leq k < 2n} g_{n,k}^-(x,y) \\
&= \max_{0 \leq k < 2n}
\left[
-\left( \frac{2(k-n)+1}{4n} \right)x
+
\left( \frac{2(k-n)+1}{4n} \right)y
-
\frac{(k-n)(k-n+1)}{4n^2}
\right]
\end{align*}
We want to show that 
\[
g_n(x,y) = g_n^+(x,y) - g_n^-(x,y), \enspace \forall (x,y) \in [0,1]^2
\]
First, we show that 
\begin{align*}
g_n^+(x,y) &= g_{n,j}^+(x,y), \enspace \forall (x,y) \in D_{n,j}^+=\left\{(x,y) \in [0,1]^2: \frac{j}{n} \leq x+y \leq \frac{j+1}{n}\right\} \\
g_n^-(x,y) &= g_{n,k}^-(x,y), \enspace \forall (x,y) \in D_{n,k}^-=\left\{(x,y) \in [0,1]^2: \frac{k}{n} \leq y-x+1 \leq \frac{k+1}{n}\right\}
\end{align*}

Let $j \in \{1, ..., 2n-1\}$.
\begin{align*}
g_{n,j}^+(x,y) &\geq g_{n,j-1}^+(x,y) \\
 \Leftrightarrow \enspace \frac{2}{4n} x + \frac{2}{4n} y &\geq \frac{j(j+1) - j(j-1)}{4n^2} \\
\Leftrightarrow \enspace \frac{1}{2n} x + \frac{1}{2n} y &\geq \frac{2j}{4n^2} \\
\Leftrightarrow \enspace x + y &\geq \frac{j}{n}
\end{align*}

Let $k \in \{1, ..., 2n-1\}$.
\begin{align*}
g_{n,k}^-(x,y) &\geq g_{n,k-1}^-(x,y) \\
 \Leftrightarrow \enspace -\frac{2}{4n} x + \frac{2}{4n} y &\geq \frac{(k-n)(k-n+1) - (k-n)(k-n-1)}{4n^2} \\
\Leftrightarrow \enspace -\frac{1}{2n} x + \frac{1}{2n} y &\geq \frac{2(k - n)}{4n^2} \\
\Leftrightarrow \enspace y-x &\geq \frac{k}{n} -1
\end{align*}

Therefore, $g_n^+$ is equal to $g_{n,j}^+$ on the domain $D_{n,j}^+$, and $g_n^-$ is equal to $g_{n,k}^-$ on the domain $D_{n,k}^-$.

Let $(x,y) \in [0,1]^2$. Note that $[0,1]^2 = \bigcup_{0 \leq j < 2n}{D_{n,j}^+} = \bigcup_{0 \leq k < 2n}{D_{n,k}^-}$. This implies that $\exists (j,k):(x,y) \in D_{n,j}^+ \bigcap D_{n,k}^- = D_{n,j,k}$.

It follows that
\begin{align*}
&g_n^+(x,y) - g_n^-(x,y) = g_{n,j}^+(x,y) - g_{n,k}^-(x,y) \\
&= \left(
\left( \frac{2j+1}{4n}\right)x
+
\left( \frac{2j+1}{4n}\right)y
-
\frac{j(j+1)}{4n^2}
\right) \\
&-
\left(
-\left( \frac{2(k-n)+1}{4n} \right)x
+
\left( \frac{2(k-n)+1}{4n} \right)y
-
\frac{(k-n)(k-n+1)}{4n^2}
\right) \\
&= \left(\frac{j+k+1-n}{2n}\right)x + \left(\frac{j-k+n}{2n}\right)y - \left(\frac{j-k+n}{2n}\right)\left(\frac{j+k+1-n}{2n}\right) \\
&= g_n(x,y)
\end{align*}
\qed    
\end{proof}

\section{Mathematical formulations}
\label{sec:formulation}

\subsection{MILP formulation of the ``Square'' representation of $g_n$}

Let $\mathcal{P} = \left\{ (j,k) \in \{0, ..., 2n-1\}^2 : int(D_{n,j,k}) \neq \emptyset \right\}$. The MILP formulation of $z=g_n(x,y)$ under the ``Square'' representation is described by the system of linear equations below.

\begin{align}
z = &\sum_{(j,k) \in \mathcal{P}} {z_{j,k}}\\
\begin{bmatrix} x \\ y \end{bmatrix} = &\sum_{(j,k) \in \mathcal{P}} {\begin{bmatrix} x_{j,k} \\ y_{j,k} \end{bmatrix}}\\
z_{j,k} = & \frac{j+k+1-n}{2n} x_{j,k} \nonumber \\
&+  \frac{j-k+n}{2n} y_{j,k}  \nonumber \\
&-\delta_{j,k} \frac{(j+k+1-n)(j-k+n)}{4n^2} , & (j,k) \in \mathcal{P}\\
\begin{bmatrix} 
x_{j,k}+y_{j,k} \\
-x_{j,k}-y_{j,k} \\
-x_{j,k}+y_{j,k} \\
x_{j,k}-y_{j,k}
\end{bmatrix}
\leq &\delta_{j,k} 
\begin{bmatrix} 
\frac{j+1}{n} \\
-\frac{j}{n} \\
\frac{k+1-n}{n} \\
-\frac{k-n}{n}
\end{bmatrix}, &(j,k) \in \mathcal{P}\\
\sum_{(j,k) \in \mathcal{P}} {\delta_{j,k}} = &1\\
x, y, z \in &[0,1]\\
x_{j,k}, y_{j,k}, z_{j,k} \in &[0,1], &(j,k) \in \mathcal{P}\\
\delta_{j,k} \in &\{0,1\}, &(j,k) \in \mathcal{P}
\end{align}

Equation S3.1.1, and S3.1.2, represent the decomposition of the variables $x$, $y$, and $z$ into their piecewise linear components. Equation S3.1.3 represents the affine equation of each linear piece. Equation S3.1.4 represents the square-shaped domain of each linear piece. Equation S3.1.5 indicates that only one linear piece can be activated at a time. 

\subsection{MILP formulation of the ``DC'' representation of $g_n$}

The MILP formulation of $z=g_n(x,y)$ under the ``DC'' representation is described by the system of linear equations below.

\begin{align}
z = &z^+ - z^- \\
z^c = & \sum_{j=0}^{2n-1}{z_j^c}, &c \in \{+,-\}\\
\begin{bmatrix} x \\ y \end{bmatrix} = &\sum_{j=0}^{2n-1} {
\begin{bmatrix} 
x_{j}^c \\ 
y_{j}^c 
\end{bmatrix}
}, &c \in \{+,-\}\\
z_j^+ = & \frac{2j+1}{4n} x_j^+
\nonumber \\
&+ \frac{2j+1}{4n} y_j^+
\nonumber \\
&- \delta_j^+\frac{j(j+1)}{4n^2},
& j \in \{0,...,2n-1\}\\
z_k^- = &- \frac{2(k-n)+1}{4n} x_k^-
\nonumber \\
&+  \frac{2(k-n)+1}{4n} y_k^-
 \nonumber \\
 &- \delta_k^-\frac{(k-n)(k-n+1)}{4n^2},
& k \in \{0,...,2n-1\}\\
\begin{bmatrix} 
x_{j}^++y_{j}^+ \\
-x_{j}^+-y_{j}^+ 
\end{bmatrix}
\leq &\delta_{j}^+ 
\begin{bmatrix} 
\frac{j+1}{n} \\
-\frac{j}{n} 
\end{bmatrix}, &j \in \{0,...,2n-1\}\\
\begin{bmatrix} 
-x_{k}^-+y_{k}^- \\
x_{k}^--y_{k}^- 
\end{bmatrix}
\leq &\delta_{k}^- 
\begin{bmatrix} 
\frac{k+1-n}{n} \\
-\frac{k-n}{n} 
\end{bmatrix}, &k \in \{0,...,2n-1\}\\
\begin{bmatrix} 
x_{k}^- \\
y_{k}^- 
\end{bmatrix}
\leq &\delta_{k}^- 
\begin{bmatrix} 
1 \\
1 
\end{bmatrix}, &k \in \{0,...,2n-1\}\\
\sum_{j=0}^{2n-1} {\delta_j^c} = &1, &c \in \{+,-\} \\
x, y, z \in &[0,1]\\
x_j^+, y_j^+, z_j^+ \in &[0,1], &j \in \{0,...,2n-1\}\\
x_k^-, y_k^-, z_k^- \in &[0,1], &k \in \{0,...,2n-1\}\\
\delta_j^+, \delta_k^- \in &\{0,1\}, &j,k \in \{0,...,2n-1\}
\end{align}

Equation S3.2.1 represents the variable $z$ as the difference of two convex components $z^+$ and $z^-$. Equation S3.2.2 represents the decomposition of each convex component of $z$ into its piecewise linear components, and Equation S3.2.3 represents the decomposition of the variables $x$ and $y$ into the piecewise linear components of their convex components. Equations S3.2.4 and S3.2.5 represent the affine equation of each linear piece of $g_n^+$ and $g_n^-$. Equations S3.2.6 and S3.2.7 represent the strip-shaped domain of each linear piece of $g_n^+$ and $g_n^-$. Equation S3.2.8 is required to ensure that the vector $[x_k^-,y_k^-]$ collapses to $[0,0]$ when the $k^{th}$ linear piece of $g_n^-$ is not active. Equation S3.2.9 indicates that only one linear piece of $g_n^+$, $g_n^-$, can be activated at a time.
